\theoremstyle{plain}
\newtheorem{theorem}{Theorem}[section]
\newtheorem{lemma}{Lemma}[section]
\newtheorem{proposition}{Proposition}[section]
\newtheorem{corollary}{Corollary}[section]
\theoremstyle{definition}
\newtheorem{definition}{Definition}[section]
\newtheorem{example}{Example}[section]
\theoremstyle{remark}
\newtheorem{remark}{Remark}[section]
\title{Rational sphere maps, linear programming, and compressed sensing}
\author{John P. D'Angelo}
\address{Dept. of Mathematics, Univ. of Illinois, 1409 W. Green St., Urbana IL 61801}
\email{jpda@math.uiuc.edu}
\author{Dusty Grundmeier}
\address{Dept. of Mathematics, Harvard }
\email{deg@math.harvard.edu}
\author{Jiri Lebl}
\address{Dept. of Mathematics, Oklahoma State Univ.}
\email{jiri.lebl@gmail.com}
\begin{document}

\maketitle

\begin{abstract} We develop a link between degree estimates for rational sphere maps and compressed sensing. We provide several new ideas and many examples, both old and new,
that amplify connections with linear programming. We close with a list of ten open problems.

\medskip

\noindent
{\bf AMS Classification Numbers}: 32H35, 32M99, 32V99, 90C05, 94A12. 

\medskip

\noindent
{\bf Key Words}: rational sphere maps; CR complexity; proper holomorphic maps; compressed sensing; linear programming.

\medskip In memory of Nick Hanges.
\end{abstract}

\section{Introduction}

This paper aims to make a surprising and powerful 
link between a basic problem in CR Geometry and the notion of compressed sensing
from applied mathematics and statistics. First briefly consider the following question. Suppose that 
${p \over q}$ is a rational mapping of degree $d$ that maps the unit sphere in the source ${\mathbb C}^n$ 
to the unit sphere in the target ${\mathbb C}^N$.
Given $d$, what is the minimum possible value of $N$? Equivalently, given $N$, what is the maximum possible value of $d$?
This problem remains open, but in the special case when $q=1$,
($p$ is a polynomial map), and the components of $p$ are orthogonal ($p$ is a monomial map), it has been completely solved.
The method of solution involves combinatorial graph theory in addition to complex variable theory,
and it does not seem to have been used outside of this problem. We modestly hope that this method will have applications
in many other circumstances, so we now describe the link with compressed sensing.

Consider an underdetermined linear system of equations $T{\bf x} = {\bf b}$ in finitely many unknowns. 
The general solution can be written ${\bf x}_0 + {\bf y}$, where $T({\bf x}_0) = {\bf b}$ and ${\bf y}$ is an arbitrary element
of the null space of $T$. In {\it compressed sensing}, one seeks a solution ${\bf a} = (a_1,..., a_n)$
with as many components $a_j$ equal to $0$ as possible. In the language of Donoho ([Do1]), one wishes 
to minimize the $L^0$ norm $\|{\bf a}\|_0$ of the solution. 
While not a norm, this name for the number of non-zero components is clever,
since $\|{\bf a}\|_0 = \sum |a_j|^0$ (if we put $0^0=0$.)  
The notion makes sense only after we have fixed a basis. In compressed sensing, several important results
compare the $L^0$ norm with the $L^1$ norm $\sum |a_j|$. See [Do2]. We discuss this matter, in our context, in Section 8.

As noted above, the study of rational sphere maps has led the authors to the same kind of question.
The simplest version involves finding the largest possible degree $d$ of a monomial mapping sending the unit sphere 
in its source ${\mathbb C}^n$ to the unit sphere in its target ${\mathbb C}^N$, in terms of $n,N$. Equivalently, given the degree $d$,
one seeks the minimum possible target dimension $N$. 
One obtains an underdetermined system of linear equations for the squared magnitudes of the coefficients 
of the distinct monomials.
This problem has been solved as follows; for $n=1$, there is no bound on $d$. For $n=2$, the sharp bound is $d \le 2N-3$.
See [DKR].
For $n\ge 3$, the sharp bound is $d \le {N-1 \over n-1}$. See [LP].
We can also regard these results as giving lower bounds on $N$ given the degree, thus making the connection
to the previous paragraph. (Minimizing $N$ is equivalent to finding the {\it sparsest} solution, given that
at least one of the coefficients of a monomial of maximum degree is not zero.)
It seems that some of the techniques developed in the papers [DKR], [G], [LP] may be both useful
and new when solving underdetermined linear systems in compressed sensing. Furthermore, in [DX2], a necessary and sufficient condition
for a map $f$ to have minimum target dimension is that a certain group $H_f$ be trivial.

Let $f= {p \over q}$ be a rational mapping in $n$ complex variables with values in ${\mathbb C}^N$ (and with no singularities on the closed unit ball).
The condition that $f$ map the unit sphere in the source
${\mathbb C}^n$ to the unit sphere in the target ${\mathbb C}^N$ arises throughout CR geometry. We call $f$ a {\it rational sphere map}.
When $f$ is not constant, $f$ is a proper holomorphic mapping 
between balls. By [F], if $n\ge 2$ and $f$ is a proper holomorphic mapping between balls with $N-n+1$ continuous derivatives
at the sphere, then $f$ is a rational sphere map. Let $\|z\|^2$ denote the squared Euclidean norm of the 
vector $z \in {\mathbb C}^k$ for any $k$
and let $\langle z,w \rangle $ denote the Euclidean inner product, again in any dimension.

In this notation we wish to find all polynomial
solutions $p$ and $q$ to the equation
$$ \|p(z)\|^2 = |q(z)|^2 \eqno (1) $$
for $z$ satisfying $\|z\|^2=1$. We may assume, without loss of generality, 
that $p$ and $q$ have no common factors and that $q(0)=1$.
We may further assume that $p(0)=0$, because the automorphism group of the target ball is transitive.
If we fix the degrees of $p$ and $q$ to be at most $d$, then we can regard
the coefficients of $p$ and $q$ as a finite number of unknowns. We obtain a {\bf quadratic} system of equations
by equating coefficients in (1) and using the condition $\|z\|^2=1$. See formula (9). When, however, 
we assume that the components of the map are monomials,
we obtain a {\bf linear} system in the squared magnitudes of the coefficients. The {\bf degree estimate} $d \le 2N-3$ from [DKR] then tells
us that the minimum $L^0$ norm $N$ of the solution is at least ${d+3 \over 2}$. Furthermore, 
this inequality is sharp, and polynomials satisfying this bound have many interesting properties. See Theorem 5.1. 
Similar conclusions apply in dimensions at least $3$, where we have $N \ge 1 + d(n-1)$.
Again this result is sharp in the monomial case. See [LP2].

We regard this paper as an invitation to study the surprising connections between two beautiful 
but not obviously related topics: degree estimates for rational 
sphere maps and the compressed sensing techniques
used in finding sparse solutions to linear systems. We therefore close the paper with a list of several open questions
that will develop and expand these connections. Several of these questions also involve the linear programming
problem of minimizing the $L^1$ norm given the degree of a monomial sphere map.

The authors acknowledge Bob Vanderbei for writing useful code concerning Problem $4$. In addition to results from [L2],
the third author has written additional independent code that corroborates the conclusions found concerning Problem 4.

\section{Easy examples}

We begin with two simple examples of the link we are emphasizing.

\begin{example} Let $p$ be a quadratic monomial map in two variables with no constant term.
Thus for complex constants $a,b,c,d,e$ we have 
$$ p(z,w) = \left (a z, bw, c z^2, dzw, e w^2\right) .$$
The condition that $p(z,w)$ maps the sphere to the sphere becomes

$$ |a|^2 + |c|^2 = |b|^2 + |e|^2 = 1$$
$$ |a|^2 + |b|^2 + |d|^2 = 2. $$
These equations are linear in the squared magnitudes, written ${\bf a} = (a_1,a_2,...,a_5)$.
We obtain the linear system
$$ \begin{pmatrix} 1 && 0 && 1 && 0 && 0 \cr 1 && 1 && 0 && 1 && 0 \cr 0 && 1 && 0 && 0 && 1  \end{pmatrix} \ \begin{pmatrix} a_1 \cr a_2 \cr a_3 \cr a_4 \cr a_5 \end{pmatrix}= 
\begin{pmatrix} 1 \cr 2 \cr 1 \end{pmatrix}. \eqno (2)$$
The linear system (2) has  general solution
$$ \begin{pmatrix} 1 \cr 1 \cr 0 \cr 0 &\cr 0\end{pmatrix} + 
a_4 \begin{pmatrix} -1 \cr 0 \cr 1 \cr 1 \cr 0 \end{pmatrix} + 
a_5 \begin{pmatrix}1 \cr 0 \cr -1 \cr 0 \cr 1 \end{pmatrix}. \eqno (3)$$
We seek solutions with nonnegative coefficients.
There is one such solution with $L^0$ norm equal to $2$, and three such solutions with $L^0$ norm equal to $3$.

We list these solutions, written in terms of the $a_j$, and then the corresponding maps.
We have chosen the coefficients of the maps to be positive; they are actually determined
only up to complex numbers of modulus $1$. 

$$ {\bf a} = (1,1,0,0,0) \ {\text {leads to}} \ (z,w, 0) $$
$$ {\bf a} = (1,0, 0,1,1) \ {\text {leads to}} \ (z,zw,w^2)$$
$$ {\bf a} = (0,1,1,1,0)\ {\text {leads to}} \ (w, zw, z^2) $$
$$ {\bf a} = (0,0,1,2,1) \ {\text {leads to}} \ (z^2, \sqrt{2}zw, w^2). $$
The first solution is first degree. There is a solution of degree $3$ to the analogous equations, but not for any degree larger than $3$.
In other words, the degree of a polynomial (or even rational)
map sending the unit sphere in ${\mathbb C}^2$ to the unit sphere
in ${\mathbb C}^3$ can be of degree at most $3$. See [Fa] for the classification of such maps. 
\end{example}

We next illustrate how the polynomial (or rational) case leads to a quadratic system.
These quadratic equations are linear in the inner products of the various unknown vectors,
and degree estimates can be regarded as a kind of quadratic compressed sensing.
We also explain how to regard the rational case as a linear problem. The following simple example from [D1]
illustrates the main idea.

\begin{example} Assume that $p:{\mathbb C}^2 \to {\mathbb C}^5$ has degree at most two and $p(0)=0$.
There are $5$ possible coefficient vectors; hence there is no loss of generality in starting with
the target dimension $5$. We want to know how {\it small} it can be. We write 
$$ p(z,w) = Az + Bw + C z^2 + Dzw + Ew^2. \eqno (4)$$
Here $A,B,C,D,E$ are elements of ${\mathbb C}^5$.
The condition that $p$ maps the source sphere to the target sphere  becomes
$$ \|Az + Bw + C z^2 + Dzw+ Ew^2\|^2 = 1 \eqno (5)$$ when  $|z|^2 + |w|^2 =1$. 
Expanding the squared norm on the left-hand side of (5) and then equating coefficients gives the following list of 
conditions on the unknowns:

$$ \langle A,B \rangle = - \langle D,E \rangle = - \langle C,D \rangle = \lambda \eqno (6.1) $$
$$ \langle A,C \rangle = \langle A,D \rangle = \langle A,E \rangle = 0 \eqno (6.2) $$
$$ \langle B,C \rangle = \langle B,D \rangle = \langle B,E \rangle = \langle C,E\rangle = 0 \eqno (6.3) $$
$$ \|A\|^2 + \|C\|^2 =1 \eqno (6.4) $$
$$ \|B\|^2 + \|E\|^2 = 1 \eqno (6.5)$$
$$ \|D\|^2 = \|C\|^2 + \|E\|^2. \eqno (6.6) $$

There are $13$ linear equations in the $15$ variables (inner products including squared norms).
The solution space to these equations involves three real parameters; we may choose them to be
$\|A\|^2$, $\|B\|^2$, and $\langle A, B \rangle = \lambda$.
We must have $ 0 \le \|A|^2 \le 1$, also that $0 \le \|B\|^2 \le 1$ and hence, by the Cauchy-Schwarz inequality,
that the complex number $\lambda$ satisfies $|\lambda| \le 1$.

The most succinct way to express this example is to write $L$ for the linear map with
$L(z,w) = Az+Bw$. Then we have
$$ p(z) = Lz \oplus \left( (\sqrt{I - L^*L}z ) \otimes z \right). $$
We summarize. The collection of polynomial mappings of degree at most two, with no constant term, and mapping the unit sphere
in ${\mathbb C}^2$ to the unit sphere in ${\mathbb C}^5$, 
is parametrized by such linear maps $L$. When $L=0$, $p$ is a homogeneous quadratic map.
When $L$ is unitary, $p$ is linear. In general $I - L^*L$ must be non-negative definite.
\end{example}

\begin{remark} A good way for solving the system resulting from (5) is to homogenize. Replace the right-hand side of (5)
with $(|z|^2 + |w|^2)^2$, expand the left-hand side, homogenize it, and finally equate coefficients.
See Section 4. \end{remark}

\section{The rational case}

Assume ${p \over q}$ is a rational mapping of degree at most $d$
that sends the unit sphere in its source to the unit sphere
in its target. Let us write 
$$ p(z) = \sum_{|\alpha| \le d} A_\alpha z^\alpha \eqno (7.1) $$
$$ q(z) = \sum_{|\beta| \le d} b_\beta z^\beta. \eqno (7.2) $$
In (7.1) the $A_\alpha$ are vectors and in (7.2) the $b_\beta$ are scalars.
The crucial condition that $\|p(z)\|^2 - |q(z)|^2 = 0$ on the unit sphere becomes
$$  \sum \left( \langle A_\alpha, A_ \gamma \rangle - b_\alpha {\overline {b_\gamma}}\right)
 z^\alpha {\overline z}^\gamma = 0 \eqno (8) $$
whenever $\sum_{j=1}^n |z_j|^2 = 1$.

It takes some work to rewrite (8) as a linear system by eliminating the constraint. 
To do so, one first replaces $z$ by $e^{i\theta}z$ in (6) and equates Fourier coefficients.
Here $\theta= (\theta_1,...,\theta_n)$ is a point on the $n$-torus. One obtains a finite number of independent equations.
Then one homogenizes each equation. Equating coefficients yields, for each multi-index $\mu$, 
$$ \sum \langle A_{\mu +\gamma}, A_{\gamma}\rangle  z^\gamma {\overline z}^\gamma \|z\|^{2d - 2|\gamma|} =
\sum b_{\mu +\gamma}{\overline b_{\gamma}}z^\gamma {\overline z}^\gamma \|z\|^{2d - 2|\gamma|}.  \eqno (9)$$
We regard this system of equations as follows. 

\medskip
{\bf Interpretation as a linear system}. 
Assume that the denominator $q$ as in (7.2) is given. We want to find all $p$ as in (7.1)
such that $\|p(z)\|^2 = |q(z)|^2$ on the unit sphere.
The right-hand side of (9) provides the right-hand side of a linear system of the form ${\bf T}(u)= v$,
where $u$ is a vector whose entries are the various inner products $\langle A_\alpha, A_\beta\rangle$.

\medskip 

In the next definition we write ${\mathbb C}^n$ for the domain of a rational map $f$. To be precise, the domain
must exclude the zero-set of the denominator $q$. The important issue is the target dimension.

\begin{definition} Let $f={p \over q}:{\mathbb C}^n \to {\mathbb C}^N$ be a rational sphere map. We say that $f$
is {\it target-minimal} if the image of $f$ lies in no affine subspace of ${\mathbb C}^N$ of lower dimension.
\end{definition}

Target minimality is equivalent to saying there is no smaller integer $k$ for which $\|f\|^2 = \|g\|^2$ for some rational map $g$
to ${\mathbb C}^k$. For example, the map $(z,w) \mapsto (z,w,0)$ is not target-minimal.
Nor is the map $(z,w) \mapsto (\alpha z, \beta z, w)$ when $|\alpha|^2 + |\beta|^2 = 1$.
Another way to express target-minimality is to expand $\|f\|^2$ in a power series about $0$; then $f$ is 
target-minimal if and only if the rank of the matrix of Taylor coefficients equals the target dimension of $f$. 
A third way to characterize
target-minimality involves groups associated with mappings. See [DX2] and Section 11.

The following non-trivial result (see [D2] and [D3]) is required in our subsequent discussion.
This result is part of a program on Hermitian analogues of Hilbert's $17$-th problem.

\begin{theorem} Suppose $q:{\mathbb C}^n \to {\mathbb C}$ is a polynomial and $q(z) \ne 0$
for $z$ in the closed unit ball. Then there is an integer $N$ and a (non-constant) polynomial map 
$p:{\mathbb C}^n \to {\mathbb C}^N$ such that both statements hold:
\begin{itemize}
\item $\|p(z)\|^2 = |q|^2 $ on the unit sphere; thus ${p \over q}$ is a rational sphere map.
\item ${p \over q}$ is reduced to lowest terms.
\end{itemize}
\end{theorem}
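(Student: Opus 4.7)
The plan is to split the argument into an existence part (a Hermitian analogue of Hilbert's 17th problem on the sphere) and a refinement ensuring lowest terms. For the existence, observe that since $q$ is zero-free on the closed unit ball, the Hermitian polynomial $|q(z)|^2$ is strictly positive on the sphere $\|z\|=1$. I would then invoke the positivstellensatz from [D2] and [D3]: any Hermitian polynomial $R(z,\bar z)$ strictly positive on the sphere admits polynomials $h_1,\dots,h_M$ with $R(z,\bar z)=\sum_j|h_j(z)|^2$ on $\|z\|=1$. Applied to $R=|q|^2$, this produces a polynomial map $h=(h_1,\dots,h_M)$ with $\|h\|^2=|q|^2$ on the sphere, so $h/q$ is already a rational sphere map with the prescribed denominator.

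To enforce lowest terms I would use a ``reserved constant'' trick. Choose $\epsilon$ with $0<\epsilon<\min_{\|z\|=1}|q(z)|^2$; this minimum is strictly positive by compactness and the hypothesis on $q$. Apply the positivstellensatz to the strictly positive Hermitian polynomial $|q|^2-\epsilon$ in place of $|q|^2$, obtaining polynomials $h_1,\dots,h_M$ with $\sum_j|h_j|^2=|q|^2-\epsilon$ on the sphere, and set
$$p:=(h_1,\dots,h_M,\sqrt{\epsilon}).$$
Then $\|p\|^2=\sum_j|h_j|^2+\epsilon=|q|^2$ on the sphere. The last component is a nonzero constant, so $\gcd(p,q)$ can only be a unit in the polynomial ring and $p/q$ is in lowest terms. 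Non-constancy of $p$ is immediate when $\deg q\ge 1$, since $|q|^2$ is then non-constant on the sphere; the degenerate case of constant $q$ is handled separately by $p(z)=qz$, which satisfies $\|p\|^2=|q|^2\|z\|^2=|q|^2$ on the sphere and has constant gcd with $q$.

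The hard part will be the existence step itself: the positivstellensatz is a deep result whose proof typically proceeds by bihomogenizing $R$ with an auxiliary variable and then appealing to a Quillen-type theorem for bihomogeneous Hermitian polynomials positive off the origin, together with a degree-increment argument that absorbs the auxiliary variable at the cost of multiplying by a power of $\|z\|^2$. Granted that tool, the lowest-terms refinement reduces to the elementary constant-component trick above, and the two parts together deliver the theorem.
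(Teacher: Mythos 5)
The paper does not actually prove Theorem 3.1 --- it quotes it from [D2] and [D3] --- and your proposal reconstructs essentially the argument given in those references: apply the sphere positivstellensatz (proved via bihomogenization and the Quillen/Catlin--D'Angelo theorem) to $|q|^2-\epsilon$ and append the constant component $\sqrt{\epsilon}$, which forces $\frac{p}{q}$ into lowest terms because any common factor would have to divide a nonzero constant. The only step you assert without justification is that $|q|^2$ is non-constant on the sphere when $q$ is non-constant; this is true but uses the hypothesis that $q$ is zero-free on the closed ball, since applying the maximum principle to both $q$ and $1/q$ shows that constant modulus on the sphere would force $q$ itself to be constant.
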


The second condition prevents us from considering trivial examples such as $p=q$
or $p(z) = q(z) \left(z_1,..., z_n\right)$. We also note that the result is elementary when $n=1$.

Thus given $q$, there is a smallest $N= N(q)$ for which the conclusion of the theorem applies.
This number is the smallest $N$ for which there are vectors in ${\mathbb C}^N$ whose inner products satisfy the system
of equations (9). The value of $N(q)$ depends on the coefficients of $q$; the following remark helps
indicate the subtlety.

\begin{remark}  Put $n=2$ and put $q(z_1,z_2) = 1 - \lambda z_1 z_2$.
We must have $|\lambda| < 2$ for $q$ to be non-vanishing on the closed unit ball.
Here $N(q)$ tends to infinity as $\lambda$ tends to $2$. Furthermore, the minimum possible degree of the numerator $p$ also
tends to infinity as $|\lambda|$ tends to $2$. This fact is closely related to stabilization
of Hermitian forms (see [CD]) and is explained in detail in [D3] for example. \end{remark}

Theorem 3.1 suggests how to regard the problem in a linear framework. We think of the denominator $q$ as given.
We try to find a numerator $p$ such that the conclusions of Theorem 3.1 hold.
Doing so leads to a linear system in the inner products of the unknown (vector) coefficients of $p$.
We seek the smallest dimension $N$ in which vectors exist for which this system of linear equations 
has a solution. There is no bound on the degree of $p$ nor on this dimension $N$ that depends
only on $n$ and the degree of $q$. Thus the minimum $L^0$ norm depends on the right-hand side of
the linear equation $T{\bf u} = {\bf v}$.

\section{The monomial case and homogenization }

When discussing monomial maps $p$, we generally assume, with no loss of generality, that $p(0)=0$.

The following simple idea has been used often by the authors in many papers. See for example [D1] and [LP1].
See [W] for a list (found using this idea) 
of all the monomial maps from the sphere in ${\mathbb C}^2$ to the sphere in ${\mathbb C}^5$.

Assume $n\ge 2$. Suppose that $p:{\mathbb C}^n \to {\mathbb C}^N$ is a monomial map and $\|p(z)\|^2 =1 $ when $\|z\|^2 =1$.
Put $x = (|z_1|^2,..., |z_n|^2)$. Write $s(x) = \sum_{j=1}^n x_j$. 
Write
$$ p(z) = ( ..., c_\alpha z^\alpha, ...). $$
There are several ways to create a linear system. One method is to eliminate a variable.
A second method, using homogenization, is preferable.

In the first method we write $(x_1,...,x_{n-1})= (t_1,...,t_{n-1})={\bf t}$.
The squared norms of the $c_\alpha$, written $A_\alpha$, satisfy the equation
$$ \sum_{\alpha} A_\alpha x^\alpha = 1 $$
on $s(x) = 1$. Write $\alpha = (\beta, \alpha_n)$. Replace $x_n$ by $1 - \sum_{j=1}^{n-1} x_j$ to obtain
$$  \sum_{\beta} A_\alpha {\bf t}^\beta (1 - \sum_{j=1}^{n-1} t_j)^{\alpha_n} = \sum_\mu C_\mu {\bf t}^\mu = 1. \eqno (10) $$

Equation (10) now holds for all ${\bf t}$ and we can equate coefficients.
Thus (10) yields an 
underdetermined system of linear equations for the unknown coefficients $A_\mu$. 
The coefficient of the constant term equals unity; all the other coefficients vanish.
Thus the right-hand side of the linear system is the column vector ${\bf v}$ with first coefficient $1$ and the remaining coefficients
equal to $0$. We seek a solution to the equation $T({\bf a})= {\bf v}$, where the components of ${\bf a}$ are non-negative
and where as many as possible vanish. It is also natural to fix the degree $d$ of $p$
and hence also assume that $c_\alpha \ne 0$ for some $\alpha$ of length $d$.

The system arising from (10) is awkward. Homogenization provides an alternative and more 
symmetric way to approach this system of equations.
Doing so illuminates the connection between compressed sensing and monomial sphere maps.
Let $p:{\mathbb C}^n \to {\mathbb C}^N$ be a monomial sphere map; we have
$ p(z) = (..., C_\alpha z^\alpha, ...)$
for complex numbers $C_\alpha$.
Then $p$ maps the source sphere to the target sphere if 
$$ \sum_\alpha |C_\alpha|^2 |z|^{2 \alpha}= 1 \eqno (11)$$
when $\sum |z_j|^2 =1$. Equation (11) depends only upon the $|z_j|^2$, and hence we obtain a real-variables problem by
replacing $(|z_1|^2,..., |z_n|^2)$ by $(x_1,...,x_n)$. Put $s(x) = \sum_{j=1}^n x_j$. Write
$c_\alpha = |C_\alpha|^2$. We get an equivalent equation to (11):
$$ \sum_\alpha c_\alpha x^\alpha = 1 \eqno (12)$$
on $s(x) =1$.

Following ideas from [D1], we homogenize this equation. Assume that $p$ is of degree $d$.
Homogenizing (12) yields
$$ \sum c_\alpha x^\alpha s(x)^{d - |\alpha|} = (s(x))^d = \sum_{|\beta|=d} { d \choose \beta} x^\beta. \eqno (13) $$
By homogeneity, (13) holds for all $x$. Equating coefficients yields a linear system for the unknown coefficients $c_\alpha$.
Finding the minimum target dimension $N$ is precisely the same problem as solving this linear system
with the smallest number of non-vanishing $c_\alpha$. Consider a sharp degree estimate $d \le \Phi(n,N)$. (Recall that $n \ge 2$.)
We rewrite this inequality as $N \ge \Psi(d,n)$ and reinterpret it:

\medskip

{\bf Interpretation in compressed sensing}. Of all solutions to (13), 
we seek those solutions for which the number of non-vanishing $c_\alpha$
is as small as possible, given that at least one of the coefficients of a largest degree term must be non-zero.

\begin{example} We return to quadratic monomial maps in source dimension $2$, writing $(z,w)$ for the variables
and $(x,y)$ for $(|z|^2, |w|^2)$. In this case $s(x,y)= x+y$. We assume, without loss of generality, that
$p(0)=0$. Put
$$p(z,w) = (Az,Bw, Cz^2,D zw, Ew^2). $$
Equation (13) becomes
$$  |A|^2 x(x+y) + |B|^2y(x+y) + |C|^2 x^2 + |D|^2 xy + |E|^2y^2 = x^2 + 2 xy + y^2. $$
Equating coefficients yields the system

$$  \begin{pmatrix} 1 & 0 & 1 & 0 & 0 \cr 1 & 1 & 0 & 1 & 0 \cr 0 & 1 & 0 & 0 & 1
  \end{pmatrix} \ \begin{pmatrix} a \cr b \cr c \cr d \cr e \end{pmatrix} = \begin{pmatrix} 1 \cr 2 \cr 1\end{pmatrix}.   \eqno (14) $$\end{example}

The minimum number of non-vanishing coefficients is $2$. We have the solution where $a=b=1$ 
and the other coefficients are $0$. This map, however, is degree $1$. To make it degree $2$, we require that
at least one of $c,d,e$ must be non-zero. Given this constraint, the minimum number becomes $3$.
These solutions are given by
$$ (a,b,c,d,e) = (1,0,0,1,1) $$
$$ (a,b,c,d,e) = (0,1,1,1,0)$$
$$ (a,b,c,d,e) = (0,0,1,2,1). $$

The system of equations obtained by homogenization is not the same system
as the one determined by elimination of a variable, although the corresponding maps are the same.
In particular the right-hand sides of the equation differ.
We prefer using the system obtained from homogenization.
One advantage is that the coefficients are non-negative.

The next proposition further illustrates the role of the monomial case. Recall that the {\it rank} of a 
holomorphic mapping $f:{\mathbb C}^n \to {\mathbb C}^N$
is the rank of the matrix of Taylor coefficients of $\|f(z)\|^2$. When $f$ is a monomial map, the matrix is diagonal,
and the rank is the number of non-zero diagonal elements. In general the entries of the matrix
are inner products of the unknown vector coefficients.

\begin{proposition}
Given $n, N, d$, the polynomial sphere map
$p \colon {\mathbb{C}}^n \to {\mathbb{C}^N}$ of degree $d$
that minimizes the number of
nonzero entries in the coefficient matrix of $\|p(z)\|^2$ is a
monomial map.
\end{proposition}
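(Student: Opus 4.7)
The plan is to associate to each polynomial sphere map a monomial sphere map built from the diagonal of its coefficient matrix, and then to argue that this diagonal alone already accounts for the minimum possible count of nonzero entries. Write $p(z)=\sum_\alpha A_\alpha z^\alpha$, let $I=\{\alpha:A_\alpha\ne 0\}$, and let $M$ be the Hermitian positive semidefinite coefficient matrix with $M_{\alpha\gamma}=\langle A_\alpha,A_\gamma\rangle$. The first observation is that $M_{\alpha\alpha}=\|A_\alpha\|^2$ is nonzero exactly for $\alpha\in I$, so $M$ has at least $|I|$ nonzero entries, with equality precisely when every off-diagonal entry vanishes. Since applying a unitary $U$ in the target sends $A_\alpha$ to $UA_\alpha$ and leaves every entry of $M$ unchanged, vanishing of the off-diagonal part is equivalent to the existence of a target unitary making $p$ into a literal monomial map.

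The second step extracts a monomial sphere map from $p$ via the torus-action technique behind equation (9). Replacing $z$ by $e^{i\theta}z=(e^{i\theta_1}z_1,\dots,e^{i\theta_n}z_n)$ in the identity $\|p(z)\|^2=1$ (valid for $\|z\|^2=1$) and integrating $\theta$ over $(S^1)^n$, only the matching multi-index terms survive, yielding
$$\sum_\alpha \|A_\alpha\|^2\,|z|^{2\alpha}=1\quad\text{on}\quad \|z\|^2=1,$$
which is exactly the monomial sphere equation (12) for the squared magnitudes $c_\alpha:=\|A_\alpha\|^2$. Thus $p^{\ast}(z):=\bigl(\sqrt{c_\alpha}\,z^\alpha\bigr)_{\alpha\in I}$ is a monomial sphere map to $\mathbb{C}^{|I|}$ of degree $d$ whose coefficient matrix is diagonal with exactly $|I|$ nonzero entries. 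This is the $\mu=0$ instance of (9) used as a standalone construction.

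The final step is the comparison. Assume $p$ is a minimizer. If $|I|\le N$, padding $p^{\ast}$ with $N-|I|$ zero components produces a monomial sphere map $\mathbb{C}^n\to\mathbb{C}^N$ of degree $d$ whose coefficient matrix has exactly $|I|$ nonzero entries; unless $M$ is already diagonal, i.e.\ unless $p$ is itself monomial up to a target unitary, this strictly improves the count and contradicts minimality of $p$. The delicate case, which I expect to be the main obstacle, is $|I|>N$: then $p^{\ast}$ does not literally fit inside $\mathbb{C}^N$. To handle it I would note that $p$ then has at least $|I|>N$ nonzero entries in $M$, whereas any monomial sphere map from $\mathbb{C}^n$ to $\mathbb{C}^N$ of degree $d$ has at most $N$ nonzero entries in its diagonal coefficient matrix. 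In the parameter ranges controlled by the sharp degree estimates $d\le 2N-3$ (for $n=2$) and $d\le (N-1)/(n-1)$ (for $n\ge 3$) such a monomial competitor is guaranteed to exist, so any non-monomial $p$ with $|I|>N$ is beaten by it and cannot be a minimizer. Combining the two cases, every minimizer is (up to a target unitary) a monomial map.
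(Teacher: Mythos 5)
Your argument is essentially the paper's own proof: averaging $\|p(e^{i\theta}z)\|^2$ over the torus kills the off-diagonal terms, the surviving diagonal part is the squared norm of a monomial sphere map of the same degree $d$, and its coefficient matrix has no more nonzero entries than that of $\|p(z)\|^2$. Your additional case analysis for $|I|>N$ (invoking the sharp degree estimates to place a monomial competitor inside $\mathbb{C}^N$) addresses a fixed-target-dimension subtlety that the paper's proof does not engage at all---the paper simply compares against the monomial map in whatever target dimension the diagonal requires---so it is a supplement to, not a departure from, the same method.
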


\begin{proof}
Consider a polynomial sphere map $p(z)$.
Replace $z$ with $e^{i\theta} z$, where $\theta$ is in the $n$-torus.
Let us average over the torus.
Let
$$ r(z,\bar{z}) = \frac{1}{{(2\pi)}^n}\int_{{[0,2\pi]}^n} \|p(e^{i\theta}z) \|^2 d \theta .$$
Note that $r(z,\bar{z}) = 1$ whenever $\|z\| = 1$.
Let us analyze the coefficient matrix of $r(z,\bar{z})$.
Averaging a coefficient $a_{\alpha\beta} z^\alpha\bar{z}^\beta$ of
$\|p(z)\|^2$ results in 0 unless $\alpha = \beta$.
If $\alpha = \beta$, the coefficient is untouched.
In other words, the coefficient matrix of $r(z,\bar{z})$ is
zero off the diagonal, and the same as the coefficient matrix of 
$\|p(z)\|^2$ on the diagonal.
In particular, $r(z,\bar{z})$ is a squared norm representing
a monomial sphere map $m(z)$.
As $p$ is of degree $d$, and the coefficient matrix of $\|p(z)\|^2$
is positive semidefinite, the diagonal, and therefore
$r(z,\bar{z}) = \| m(z) \|^2$
has a nonzero term corresponding to a term of degree $d$ in $m(z)$.
Furthermore, since they agree on the diagonals,
the coefficient matrix of $\| m(z) \|^2$ has at most as many
nonzero terms as the coefficient matrix of $\|p(z)\|^2$.
The result follows.
\end{proof}

\section{Source dimension two}

We begin with a remarkable collection of polynomials. Let $d$ be a positive integer. Define $p_d(x,y)$ by

$$   p_d(x,y) = \left({ x + \sqrt{x^2+4y} \over 2} \right)^d
+ \left({ x - \sqrt{x^2+4y} \over 2} \right)^d  + (-1)^{d+1}y^d. \eqno (15)     $$
This family of polynomials  has many interesting properties. We mention just a few now, and say more later.
See [D1], [D3], and their references.

\begin{theorem} Define $p_d$ as in (15). Each $p_d$ is a polynomial of degree $d$ and 
\begin{enumerate}
\item For each $d$, we have $p_d(x,y)=1 $ on $x+y=1$.
\item For each odd $d$, all the coefficients of $p_d$ are non-negative.
\item For each even $d$, all the coefficients of $p_d$ are non-negative except for the coefficient of $y^d$, which is $-1$.
\item $p_d(\eta x, \eta^2 y) = p_d(x,y)$ whenever $\eta$ is a $d$-th root of unity.
\item For each odd $d$, the polynomial $p_d$ has precisely ${d+3 \over 2}$ terms.
\item The polynomial $p_d$ is congruent to $x^d + y^d$ modulo $(d)$ if and only if $d=1$ or $d$ is prime.
\end{enumerate}\end{theorem}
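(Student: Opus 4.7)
The unifying idea is to introduce $\alpha,\beta$, the two roots of $t^2 - xt - y = 0$, so that $\alpha+\beta = x$, $\alpha\beta = -y$, and
$$p_d(x,y) = s_d(x,y) + (-1)^{d+1} y^d, \qquad s_d := \alpha^d + \beta^d.$$
Newton's identities give $s_0 = 2$, $s_1 = x$, and $s_d = x\,s_{d-1} + y\,s_{d-2}$. Almost every part follows from this recursion.

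For part (1), on $x+y=1$ the discriminant $x^2+4y$ equals $(x-2)^2$, so $\alpha=1$ and $\beta=x-1=-y$; hence $s_d = 1 + (-1)^d y^d$ and $p_d = 1$. For parts (2)--(4), I induct via the recursion to show $s_d$ has non-negative integer coefficients and is weighted homogeneous of weight $d$ under $\deg x = 1$, $\deg y = 2$. Weighted homogeneity gives $s_d(\eta x, \eta^2 y) = \eta^d s_d(x,y) = s_d(x,y)$ when $\eta^d=1$, and the term $(-1)^{d+1}y^d$ (weight $2d$) is likewise invariant, proving (4). It also forces $\deg_y(s_d) \le \lfloor d/2\rfloor < d$, so adding $+y^d$ (odd $d$) preserves non-negativity while subtracting $y^d$ (even $d$) introduces the single negative coefficient on $y^d$, proving (2) and (3).

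For part (5), induction from the recursion (using $\binom{d-k}{k} + \binom{d-k-1}{k-1} = \frac{d}{d-k}\binom{d-k}{k}$) establishes the Lucas-polynomial formula
$$s_d(x,y) = \sum_{k=0}^{\lfloor d/2 \rfloor} \frac{d}{d-k}\binom{d-k}{k}\,x^{d-2k}\,y^k,$$
whose coefficients are strictly positive. For odd $d$, this sum has $(d+1)/2$ terms, and adding the $y^d$ term gives the claimed $(d+3)/2$.

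For part (6), rewrite the Lucas coefficient as $a_{d,k} = \frac{d}{k}\binom{d-k-1}{k-1}$ for $1 \le k \le \lfloor d/2 \rfloor$. When $d$ is prime and $1 \le k < d$, $\gcd(k,d)=1$ forces $k \mid \binom{d-k-1}{k-1}$, so $d \mid a_{d,k}$; together with the leading coefficients $1$ on $x^d$ and $(-1)^{d+1} \equiv 1 \pmod d$ on $y^d$ (valid for every prime $d$, including $d=2$), this yields $p_d \equiv x^d + y^d \pmod d$, and $d=1$ is trivial. The converse is the main obstacle. For composite $d$, I would factor $d = p^a m$ with $p$ prime, $\gcd(p, m) = 1$, and $a \ge 1$. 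If $m \ge 2$, Frobenius in $\mathbb{F}_p[x,y]$ yields $s_d \equiv s_m^{p^a} \pmod p$; since the coefficient of $x^{m-2}y$ in $s_m$ is $m$, which is coprime to $p$, the polynomial $s_m^{p^a}$ has a non-zero coefficient modulo $p$ on the monomial $x^{p^a(m-2)}y^{p^a}$, which is neither $x^d$ nor $y^d$, so $p_d \not\equiv x^d + y^d \pmod p$. The remaining case $d = p^a$ with $a \ge 2$ requires a finer $p$-adic analysis: Lucas' theorem gives $\binom{p^a - p - 1}{p-1} \equiv 1 \pmod p$, so $v_p(a_{d,p}) = a-1 < a$, and hence $d \nmid a_{d,p}$. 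Uniformly handling this pure prime-power case is the principal technical difficulty.
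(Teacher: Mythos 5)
Your proof is correct, and it takes a genuinely different route from the one the paper indicates (the paper states the theorem with references to [D1] and [D3] and only sketches an argument). Your decomposition $p_d = s_d + (-1)^{d+1}y^d$ with $s_d = \alpha^d + \beta^d$ and the Lucas-type recursion $s_d = x\,s_{d-1} + y\,s_{d-2}$ yields items (1)--(5) by induction: integrality and non-negativity come straight from the recursion, item (4) is weighted homogeneity, and the closed form $\frac{d}{d-k}\binom{d-k}{k}$ follows from a Pascal-type identity; item (6) is then elementary number theory, via the rewriting $a_{d,k} = \frac{d}{k}\binom{d-k-1}{k-1}$ for prime $d$, Frobenius in characteristic $p$ when $d = p^a m$ with $m \ge 2$, and Lucas' theorem when $d = p^a$. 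The paper's sketched route is different: for odd $d$ it characterizes $p_d$ by invariance under $(x,y)\mapsto(\eta x,\eta^2 y)$ together with $p_d=1$ on $x+y=1$, homogenizes to obtain a linear system with a unique solution, evaluates the resulting sum $c_k = \left(\tfrac14\right)^{r-k}\sum_j \binom{2r+1}{2j}\binom{j}{k}$ by Wilf--Zeilberger methods, and proves item (6) by transferring the classical primality property of $(x+y)^d$ through homogenization. Your approach is more self-contained and avoids the WZ machinery; the paper's approach buys the uniqueness statement (invariance plus constancy on the line determines the polynomial), which fits its linear-system and compressed-sensing narrative, and ties item (6) to a familiar fact about $(x+y)^d$. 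One small remark: your closing hedge is unnecessary --- the Lucas-theorem computation you give ($v_p(a_{p^a,p}) = a-1 < a = v_p(d)$) already settles the prime-power case completely, and for even composite $d$ there is even a shortcut, since the coefficient $-1$ of $y^d$ is not congruent to $1$ modulo $d$ once $d > 2$.
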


These polynomials exhibit striking algebraic combinatorial properties. To indicate why,
instead of defining them as in (15), we could proceed as follows. Fix an odd integer $d=2r+1$ and assume that $p_d$ is of degree at most $d$. If, in addition, $p_d$ satisfies item (4)
of Theorem 5.1, then there are constants $c_k$ and $c$ such that
$$ p_d(x,y) = \sum_{k=0}^r c_k x^{2r+1-2k}y^k + c y^{2r+1}. $$ 
The reason is that only invariant monomials can arise. Next make the crucial assumption that item (1) holds. Then $c=1$ and
homogenization yields
$$ \sum_{k=0}^r c_k x^{2r+1-2k}y^k (x+y)^k+ y^{2r+1} = (x+y)^{2r+1}. $$
The corresponding linear system has a unique solution; in fact (see [D1])
$$ c_k = \left({1 \over 4}\right)^{r-k} \sum_{j=k}^r {2r+1 \choose 2j} {j \choose k}. $$
The kind of sum arising here can be evaluated using the Wilf-Zeilberger methods (see [PWZ]). Considerable work yields the formula
$$ c_k = {(2r+1)(2r-k)! \over k! (2r+1-2k)!}.  $$
Furthermore the proof of item (6) combines the method of homogenization with the analogous well-known primality property
for the polynomials $(x+y)^d$.

For the connection with sensing, the crucial point is that these polynomials have the fewest number of non-zero coefficients given their degree
and that $p(x,y)=1$ on the line $x+y=1$. We state these results next.

\begin{corollary} For each odd positive integer $2r+1$, there is a polynomial $p(x,y)$ of degree
$2r+1$, with $r+2$ terms, such that 
\begin{itemize}
\item $p(x,y)- 1$ is divisible by $x+y-1$. 
\item each coefficient of $p$ is non-negative.
\end{itemize} \end{corollary}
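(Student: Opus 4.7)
The plan is to take the corollary as a direct consequence of Theorem 5.1 applied to the odd case $d=2r+1$, with the single polynomial witness being $p := p_{2r+1}$ as defined by formula (15). Since Theorem 5.1 may be assumed, there is essentially no new work to do beyond matching the three required properties to the appropriate items of that theorem and verifying the arithmetic on the term count.

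First, I would set $d = 2r+1$, observe that $d$ is odd, and write down $p_d(x,y)$ using (15). The degree statement is immediate from Theorem 5.1, which asserts $\deg p_d = d = 2r+1$. For the divisibility property, I would invoke item (1) of Theorem 5.1: the polynomial $p_d(x,y)-1$ vanishes on the affine line $\{x+y=1\}$, so by the Nullstellensatz (or, more elementarily, polynomial long division in $y$ by the monic-in-$y$ polynomial $x+y-1$), the factor $x+y-1$ divides $p_d(x,y)-1$ in $\mathbb{C}[x,y]$. For non-negativity of coefficients, I would quote item (2) of Theorem 5.1 verbatim, since $d = 2r+1$ is odd.

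Finally, for the term count, I would use item (5): $p_d$ has exactly $(d+3)/2$ monomial terms, and substituting $d = 2r+1$ gives
\[
\frac{d+3}{2} \;=\; \frac{(2r+1)+3}{2} \;=\; \frac{2r+4}{2} \;=\; r+2,
\]
which matches the required count.

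The main potential obstacle is really just the divisibility step, and it is trivial once one has item (1) in hand; if I wanted to avoid invoking any ring-theoretic fact, I would instead observe that writing $p_d(x,y)-1$ in the form $\sum_k a_k(x)\, y^k$ and performing Euclidean division by $x+y-1$ (viewed as linear in $y$) produces a remainder that is a polynomial in $x$ alone, which must vanish because $p_d(x, 1-x) - 1 \equiv 0$ by item (1). Everything else is a literal application of Theorem 5.1.
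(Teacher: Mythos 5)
Your proposal is correct and matches the paper's route exactly: the corollary is obtained by taking $p = p_{2r+1}$ from (15) and reading off items (1), (2), and (5) of Theorem 5.1, with the divisibility following from vanishing on the line $x+y=1$ and the term count $(d+3)/2 = r+2$. Nothing further is needed.
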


\begin{corollary} For each odd positive integer $d=2r+1$, 
there is a proper monomial map $f:{\mathbb B}_2 \to {\mathbb B}_{r+2}$ of degree
$2r+1$. Thus $d=2N-3$.
 \end{corollary}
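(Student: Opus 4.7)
The plan is to derive Corollary 5.2 as an essentially formal consequence of Corollary 5.1 by converting a real polynomial identity in $(x,y)$ into a squared-norm identity in $(z,w)$. First I would take the polynomial $p(x,y)$ of degree $d=2r+1$ with exactly $r+2$ nonnegative coefficients furnished by Corollary 5.1, which satisfies $p(x,y)=1$ on the line $x+y=1$. Write
\[
  p(x,y) \;=\; \sum_{\alpha \in S} c_\alpha \, x^{\alpha_1} y^{\alpha_2},
\]
where $S$ indexes the $r+2$ nonzero terms, each $c_\alpha > 0$, and at least one $\alpha \in S$ has $|\alpha|=d$.

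Next I would substitute $(x,y) = (|z|^2,|w|^2)$. The hypothesis that $p=1$ on $x+y=1$ becomes the statement that $p(|z|^2,|w|^2)=1$ on the unit sphere $\|(z,w)\|^2 = 1$. Because every $c_\alpha \ge 0$, I can take square roots and define the monomial map
\[
  f(z,w) \;=\; \bigl( \sqrt{c_\alpha}\, z^{\alpha_1} w^{\alpha_2} \bigr)_{\alpha \in S} \;\in\; \mathbb{C}^{r+2}.
\]
Then $\|f(z,w)\|^2 = \sum_{\alpha \in S} c_\alpha |z|^{2\alpha_1}|w|^{2\alpha_2} = p(|z|^2,|w|^2)$, which equals $1$ on the unit sphere. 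Hence $f$ sends $\partial \mathbb{B}_2$ into $\partial \mathbb{B}_{r+2}$; since $f$ is a holomorphic polynomial map bounded by $1$ on the closed ball and equal to $1$ in modulus on the boundary, it is a proper map $\mathbb{B}_2 \to \mathbb{B}_{r+2}$. The degree of $f$ is $d=2r+1$ because $p_d$ has a degree-$d$ term (item (5) of Theorem 5.1 guarantees one survives), and the target dimension is $N=r+2$, giving $d = 2N-3$ as claimed.

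Almost nothing here is an obstacle: the real content is already carried by Theorem 5.1 (and its Corollary 5.1), which gives the $r+2$ nonzero \emph{nonnegative} coefficients. The one subtlety worth flagging is the nonnegativity hypothesis — without it we could not extract square roots and build a \emph{monomial} map. Nonnegativity is exactly item (2) of Theorem 5.1 (the $(-1)^{d+1}y^d$ obstruction in item (3) is harmless precisely because $d$ is odd, which is why the corollary is stated only for odd $d$). Thus the only step that required real work is hidden in Theorem 5.1, and Corollary 5.2 follows immediately by the dictionary between linear identities on $\{x+y=1\}$ and monomial sphere maps.
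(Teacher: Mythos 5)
Your proposal is correct and is exactly the argument the paper intends: Corollary 5.2 is an immediate consequence of Corollary 5.1 via the dictionary, set up in Section 4, between polynomials with nonnegative coefficients equal to $1$ on $x+y=1$ and monomial sphere maps obtained by substituting $(x,y)=(|z|^2,|w|^2)$ and taking square roots of the coefficients. The only cosmetic slip is attributing the existence of a degree-$d$ term to item (5) of Theorem 5.1; it is the statement that $p_d$ has degree exactly $d$ (item (5) counts the terms), but this does not affect the proof.
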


By this corollary there is an example where $d = 2N-3$. The next theorem shows that $d$ can be no larger if $N$ is fixed.

\begin{theorem} [DKR] Let $p(x,y)$ be a polynomial such that $p(x,y)$ has all non-negative coefficients and $p(x,y)-1$
is divisible by $(x+y-1)$. Let $d$ denote the degree of $p$ and let $N$ denote the number of terms (distinct monomials)
in $p$. Then $d \le 2N-3$, and (by Corollary 5.1) this result is sharp. \end{theorem}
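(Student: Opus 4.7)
My plan is to convert the hypothesis on $p$ into a homogeneous linear system via homogenization, encode the support $S$ as a family of integer intervals, and then derive the bound $d\le 2N-3$ by an inductive argument.

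\emph{Homogenization.} Writing $p(x,y)=\sum_{(i,j)\in S} c_{ij}x^iy^j$ with $c_{ij}>0$ and $|S|=N$, the hypothesis $(x+y-1)\mid (p-1)$ is exactly $p(x,1-x)\equiv 1$; multiplying through by $(x+y)^d$ gives the polynomial identity
\[
\sum_{(i,j)\in S} c_{ij}\,x^iy^j(x+y)^{d-i-j}=(x+y)^d.
\]
Equating the coefficient of $x^{d-m}y^m$ for each $m\in\{0,1,\ldots,d\}$ produces the linear system
\[
\sum_{\substack{(i,j)\in S\\ i\le m\le d-j}} c_{ij}\binom{d-i-j}{m-i}=\binom{d}{m}, \qquad (\ast_m)
\]
a set of $d+1$ equations in $N$ non-negative unknowns with non-negative coefficients.

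\emph{Intervals attached to the support.} Assign to each $(i,j)\in S$ the integer interval $I(i,j)=\{i,i+1,\ldots,d-j\}$ of length $d-(i+j)+1$; this is precisely the set of $m$ for which $(i,j)$ contributes positively to $(\ast_m)$. Because $\binom{d}{m}>0$ and every summand on the left of $(\ast_m)$ is non-negative, $\{I(i,j):(i,j)\in S\}$ must cover $\{0,\ldots,d\}$. Reading $(\ast_0)$ and $(\ast_d)$ then forces $S$ to contain some $(0,j)$ (a pure $y$-power) and some $(i,0)$ (a pure $x$-power); the degree-$d$ hypothesis forces some $(i,j)\in S$ with $i+j=d$, whose $I(i,j)$ is a single point.

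\emph{Inductive recursion.} Let $N(d)$ denote the minimum of $|S|$ over admissible $p$ of exact degree $d$. Direct analysis of $(\ast_m)$ at small $d$ gives $N(1)=2$ (only $p=x+y$ works) and $N(2)=3$ (realized by $(x+y)^2$). The substance of the proof is the recursion
\[
N(d)\ge N(d-2)+1 \qquad (d\ge 3),
\]
which iterated from the base cases yields $N(d)\ge \lfloor d/2\rfloor+2$, equivalent to $d\le 2N-3$. To establish the recursion I would pick a top-degree monomial $(a,b)\in S$ with $a+b=d$ together with an adjacent support element one step down the staircase, and use Pascal's identity $\binom{n}{k}=\binom{n-1}{k-1}+\binom{n-1}{k}$ together with the non-negativity of the $c_{ij}$ to reassemble the $c_{ij}$ into a non-negative solution of a system of the same form $(\ast_m)$ in degree at most $d-2$ whose support has at most $N-1$ elements; the inductive hypothesis then finishes.

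\emph{Main obstacle.} The hard step is this peeling lemma. A bare covering bound is nowhere near sharp: for $d=5$ the three intervals $\{0\},\{1,2,3,4\},\{5\}$ of the putative support $\{x^5,xy,y^5\}$ cover $\{0,\ldots,5\}$, yet $(\ast_1)$ forces $c_{11}=5$ while $(\ast_2)$ forces $3c_{11}=10$, which is inconsistent. Thus the real work is not in the lengths of the intervals but in the rigidity of the Pascal-triangle weights: one must exploit that the contributions $\binom{d-i-j}{m-i}$ are prescribed Pascal entries rather than free parameters. Showing that an arbitrary admissible $S$ always admits a valid peel, and that the Pascal identities control the loss of support cardinality to at most one per degree drop of two, is the graph-theoretic combinatorial heart of [DKR]; sharpness is guaranteed by the extremal polynomials $p_d$ of formula (15), whose support $(d,0),(d-2,1),\ldots,(1,r),(0,d)$ realizes equality in the recursion.
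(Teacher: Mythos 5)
Your setup is sound and matches the paper's: homogenizing to $\sum_{(i,j)\in S}c_{ij}x^iy^j(x+y)^{d-i-j}=(x+y)^d$ and equating coefficients is exactly how the paper converts the hypothesis into the linear system, and your remark that a bare interval-covering count cannot give the sharp bound is correct. But the proposal stops precisely where the proof has to begin: the entire content of the theorem is packed into the asserted recursion $N(d)\ge N(d-2)+1$, and the ``peeling lemma'' that is supposed to deliver it is described (``pick a top-degree monomial and an adjacent support element, reassemble via Pascal's identity'') but never established. Nothing shows that such a peel always exists for an arbitrary admissible support, that the reassembled coefficients remain non-negative, or that the support loses at most one element. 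There is also a directional slip: you ask for a new admissible polynomial of degree \emph{at most} $d-2$, but the induction needs the degree to stay \emph{at least} $d-2$ (or the term count to drop in proportion to the degree drop); if one peel removes a single term while the degree falls by more than two, iterating only yields $d\le 2N-1$, not $d\le 2N-3$. Note too that a top-degree monomial $x^ay^b$ with $a+b=d$ enters only the single equation $(\ast_a)$, so deleting it is the easy part; the difficulty is converting the remaining identity into a lower-degree instance of the same problem without creating new terms, and no mechanism for this is given. Since you yourself call this step the combinatorial heart of [DKR], what you have is a reduction of the theorem to an unproven lemma, not a proof.

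For comparison, the paper's argument (Section 6, following [DKR]) is global rather than inductive in the degree. One passes to the quotient $q=(p-1)/(x+y-1)$, labels each lattice point of its Newton diagram by the sign of the corresponding coefficient, orients edges upward and to the right out of positive points and into negative points, and counts \emph{sinks}: each sink forces terms of $p$ with positive coefficients (no cancellation is possible), so $N$ is at least the number of sinks. For $(x+y)^d$ one has $q=1+(x+y)+\cdots+(x+y)^{d-1}$, all labels positive, and $2r+2$ sinks along $a+b=d$ when $d=2r+1$; dehomogenizing toward a general $p$ of degree $d$ keeps sinks at $(d,0)$ and $(0,d)$ (this is the content of Lemma 5.1), while the remaining $2r$ sinks can coalesce at most in pairs, leaving at least $r$ of them. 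Hence there are at least $r+2$ sinks, so $N\ge r+2$, i.e.\ $d\le 2N-3$, with sharpness from the polynomials $p_d$ of (15), exactly as you note. If you want to pursue your inductive route, the missing peeling lemma is where all the work lies, and it is not clear it can be carried out; proving it would constitute a genuinely new proof of the [DKR] bound.
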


We next recast these theorems in the language of compressed sensing. The first reformulation is direct.
Using a lemma from [LL], we provide a considerably simpler second reformulation.

\medskip
{\bf First reformulation of the problem in source dimension $2$.}
\medskip

Fix a positive integer $d$. Let $K= {d^2 + 3d \over 2}$ when $d$ is odd and ${d^2 + 3d+1 \over 2}$ when $d$ is even.
We consider an unknown vector $\bf u$ in ${\mathbb R}^K$. We wish to solve a linear system
$T{\bf u}= {\bf v}$, where ${\bf v}$ is a specific vector in ${\mathbb R}^{d+1}$. We are given the standard orthonormal basis
of these Euclidean spaces. In these coordinates, the components of ${\bf v}$ are the binomial coefficients
$(1,d,{d \choose 2},..., d,1)$. We call the last $d+1$ variables in the source {\it distinguished}.
Our aim is to find ${\bf u}$ such that

\begin{itemize}
\item $T{\bf u}= {\bf v}$
\item At least one of the distinguished variables is not zero.
\item Each $u_j$ is non-negative.
\end{itemize}

Let $N$ denote the minimum $L^0$ norm of all solutions. Then the above results state that
$N= {d+3 \over 2}$ when $d$ is odd, and $N= {d \over 2} + 2$ when $d$ is even.

\begin{remark} Without the constraint that one of the distinguished variables is not $0$, then $N=2$. This solution
corresponds to $u_1 = u_2 = 1$ and $u_j = 0$ for $j \ge 2$. The corresponding polynomial is $x+y$.
For emphasis, we explain again why we have distinguished variables. We want the solution to correspond
to a polynomial of degree precisely $d$, rather than to a polynomial of degree at most $d$. \end{remark}

\begin{remark} The number $K = {d^2 + 3 d \over 2}$ is the dimension of the space of polynomials
of degree at most $d$ in two variables whose constant term is $0$. We have
$$K = 2 + 3 + ... +(d+1) = {(d+1)(d+2) \over 2} - 1 = {d^2 + 3d \over 2}. $$ 
The number $d+1$ is the dimension of the space of homogeneous polynomials of degree $d$ in two variables.
\end{remark}

\begin{example} Suppose $f$ is a monomial map from two-space of degree at most $3$.
There are $9$ possible monomials, hence $9$ unknown coefficients. Using the
method of homogenization, the system of equations that must be solved
becomes

$$ \begin{pmatrix} 1 & 0 & 1 & 0 & 0 & 1 & 0 & 0 & 0 \cr 
2 & 1 & 1 & 1 & 0 & 0 &  1 & 0 & 0 \cr
1 & 2 & 0 & 1 & 1 & 0 & 0 & 1 & 0 \cr 0 & 1 & 0 & 0 & 1 & 0 & 0 & 0 & 1  \end{pmatrix} \ 
\begin{pmatrix} a_1 \cr a_2 \cr a_3 \cr a_4 \cr a_5 \cr a_6 \cr a_7 \cr a_8 \cr a_9 \end{pmatrix} = 
\begin{pmatrix} 1 \cr 3 \cr 3 \cr 1 \end{pmatrix} \eqno (16)$$

This matrix has rank $4$, and there is a $5$-dimensional space of solutions.
Of course we want non-negative solutions. If we want something of degree $3$, then one of the last four variables must be non-zero. 
It follows that at least {\it two} of these variables must be non-zero. The smallest target dimension possible
for a map of degree $3$ is also $3$, obtained by the map
$(\sqrt{3}zw, z^3, w^3)$. In the notation from (8),
we have $a_4=3$, $a_6=1$, and $a_9 = 1$. This map is the special case of (15) when $r=1$. \end{example}

\begin{remark} If $p$ is a proper rational mapping of degree $d$
between balls, then the rank of the terms of degree $d$ is at least the domain dimension.
In particular, for a monomial map of degree $d$ in two variables, there must be at least $2$ terms of highest degree.
See Lemma 5.1 and [LL] for various generalizations. \end{remark}

\begin{example} We list the polynomials from (15) corresponding to $r=0,1,2,3,4$.
$$ x+y $$
$$ x^3 + 3xy + y^3$$
$$ x^5 + 5 x^3 y + 5 xy^2 + y^5 $$
$$ x^7 + 7 x^5 y + 14 x^3 y^2 + 7 x y^3 + y^7$$
$$ x^9 + 9 x^7 y + 27 x^5 y^2 + 30 x^3 y^3 + 9 x y^4 + y^9.  $$
A huge amount is known about these polynomials and various generalizations.
See for example Theorem 5.1, [D1], [D3], [G], and their references. \end{example}

\begin{example} Assume $p$ has at most $5$ terms and $p(x,y)=1$ on $x+y=1$. 
By Theorem 5.2, the degree of $p$ is at most $7$. 
Put $p(x,y) = a_1 x + a_2 y + a_3 x^2 + ... + a_{35} y^7$. 
Homogenizing as before yields a system of
$8$ equations in these $35$ unknowns. The sparsest solution is of course given by $x+y$; that is
$a_1 = a_2 = 1$ and all the other variables equal $0$. If we assume, however, that there is at least one term of degree $7$,
then the sparsest solution will have $5$ terms. There are four such examples:
$$ x^7 + y^7 + {7 \over 2} x^5 y + {7 \over 2} x y^5 + {7 \over 2} xy \eqno (17)$$
$$ x^7 + y^7 + 7 x^3 y^3 + 7 x y^3 + 7 x^3 y \eqno (18) $$
$$ x^7 + 7 x^5 y + 14 x^3 y^2 + 7 x y^3 + y^7 \eqno (19) $$
$$ x^7 + 7 x y^5  + 14 x^2 y^3 + 7 x^3 y + y^7. \eqno (20) $$
Notice that (19) and (20) are examples of the form found in Theorem 5.1; 
(20) is obtained from (19) by interchanging $x$ and $y$. We note that (17) and (18) are symmetric in $x$ and $y$.
Thus, in degree $7$, there are examples {\bf not} of the form found in Theorem 5.1.
The papers [DL2] and [LL] discuss this phenomenon in detail. See also Section 7. 
We repeat the main point: while the system of linear equations admits a solution
with $L^0$ norm $N$ equal to $2$, if we assume that there is a term of degree $7$, then the minimum $L^0$ norm is $5$.
\end{example}

\medskip
{\bf Second reformulation in source dimension $2$}
\medskip

In this reformulation we eliminate the notion of distinguished variables. We can do so because of a result proved about
monomial proper maps with minimum target dimension $N$. We state Lemma 3.1 from [LL], in the language of this paper.

\begin {lemma} Let $d$ be an odd integer and let $f(x,y)$ be a polynomial of degree $d$ such that
the coefficients of $f$ are non-negative and $f(x,y) = 1$ on the line $x+y=1$. Suppose also
that $f$ has $N$ terms where $N$ is minimal. (In other words, $N$ is the smallest $L^0$ norm of a solution
to the problem.) Then 
$$f(x,y) = x^d + y^d + \ {\rm {lower \ order \ terms}} .$$  \end{lemma}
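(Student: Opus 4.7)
I would argue by contradiction. Suppose $f(x,y)$ is a polynomial of degree exactly $d$ with non-negative coefficients, satisfying $f(x,1-x)=1$ and having the minimum number of terms $N = (d+3)/2$, yet with top-degree part $f_d$ not equal to $x^d + y^d$.

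The first step extracts structural information. Writing $f - 1 = (x+y-1) Q(x,y)$ (valid since $f \equiv 1$ on $x+y=1$) and comparing the degree-$d$ parts yields $f_d = (x+y) R(x,y)$ with $R$ homogeneous of degree $d-1$. Since $d$ is odd, $x^d + y^d = (x+y) \sum_{k=0}^{d-1} (-1)^k x^{d-1-k} y^k$, so $f_d = x^d + y^d$ holds iff $R$ is this alternating polynomial. The failure $f_d \neq x^d + y^d$ breaks into cases: (a) $c_{(d,0)} = 0$, so the $x^{d-1}$-coefficient of $R$ vanishes, $R$ is divisible by $y$, and no degree-$d$ monomial of $f$ equals $x^d$; (b) the symmetric case $c_{(0,d)} = 0$; (c) $c_{(d,0)}$ or $c_{(0,d)}$ lies strictly in $(0,1)$, forcing an extra pure-axis monomial by virtue of the identities $\sum_k c_{(k,0)} = \sum_k c_{(0,k)} = 1$ (obtained by equating the $x^d$ and $y^d$ coefficients in the homogenization identity (13)); or (d) some interior degree-$d$ coefficient $c_{(k, d-k)}$ with $0 < k < d$ is nonzero.

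The second step is a counting argument showing each case forces $N > (d+3)/2$. Decomposing $f = \phi(x) + \psi(y) + xy \cdot h(x,y)$ into pure-$x$, pure-$y$, and interior parts, the hypothesis $N = (d+3)/2$ imposes a strict budget on the total monomial count. The natural framework is the bipartite graph-theoretic setup from the sharpness proof of Theorem 5.2 in [DKR]: associate to each nonzero monomial of $f$ a vertex on one side, to each of the $d+1$ homogenization equations a vertex on the other, with an edge whenever the monomial contributes to the equation. The vertices $x^d$ and $y^d$ are maximally efficient since each touches exactly one equation; their absence in cases (a), (b) forces replacement by monomials touching multiple equations, while the extra pure-axis term in case (c) or the extra interior degree-$d$ term in case (d) together with Remark 5.3 directly inflates the vertex count.

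\textbf{Main obstacle.} The crux is to quantify the efficiency loss in each case as at least one extra vertex, yielding $N \ge (d+3)/2 + 1$. The graph-theoretic counting in [DKR] is designed for precisely this purpose, but adapting it to track non-negativity and the missing top-degree vertex requires careful bookkeeping across all four failure modes. An alternative, potentially cleaner route is to induct on $d$ using the slice substitutions $y = 0$ and $x = 0$: in case (a), the polynomial $\phi(x) = f(x,0)$ has non-negative coefficients, $\phi(1)=1$, and $\deg \phi \le d-1$, so applying the inductive structure on $\phi$ (and analogously $\psi$) together with the interior compatibility condition coming from $x(1-x) h(x,1-x) = 1 - \phi(x) - \psi(1-x)$ should yield the required strict inequality.
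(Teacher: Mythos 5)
Your reduction step is fine: writing $f-1=(x+y-1)Q$, identifying $f_d=(x+y)R$, and using the identities $\sum_a c_{(a,0)}=\sum_b c_{(0,b)}=1$ to split the failure of $f_d=x^d+y^d$ into the cases (a)--(d) is correct (and these identities also rule out a top coefficient exceeding $1$, which you use only implicitly). But the decisive step is missing, and you say so yourself: the entire content of the lemma is the claim that each failure mode costs at least one extra term, i.e.\ $N\ge \frac{d+3}{2}+1$, and your proposal does not carry out that count in any of the four cases. Appealing to ``the graph-theoretic counting in [DKR]'' does not close this. The [DKR] machinery, as recalled in Section 6 of the paper, is a sink count on the Newton diagram of $q=(f-1)/(x+y-1)$: one checks that the coefficient of $x^{d-1}$ in $q(x,0)$ equals $c_{(d,0)}$, so the assertion that $(d-1,0)$ and $(0,d-1)$ carry the label ${\bf P}$ (hence sinks at $(d,0)$ and $(0,d)$) is \emph{equivalent} to the lemma, not a prior fact you may cite; the paper explicitly says these sinks ``correspond to Lemma 5.1.'' Remark 5.3 only gives at least two top-degree terms, which does not by itself inflate the total count in case (d), where $x^d$, $y^d$, and an interior top term could a priori coexist within the budget. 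Your fallback induction on $d$ via the slices $f(x,0)$, $f(0,y)$ is too vague to assess: the inductive hypothesis concerns two-variable polynomials of odd degree with minimal term count, and it is not explained how it applies to the one-variable slice $\phi$ or how the ``interior compatibility condition'' produces a strict inequality.

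For context, the paper itself does not prove this statement; it quotes Lemma 3.1 of [LL] verbatim, and the only in-paper material bearing on it is the sources-and-sinks discussion of Section 6. So the benchmark here is the argument of [DKR]/[LL], which works with the quotient $q$ and its directed Newton diagram rather than with your bipartite monomials-versus-equations graph. Reworking your cases inside that framework --- showing that if $c_{(d,0)}=0$ (or $<1$, or if an interior degree-$d$ coefficient is positive) then the dehomogenization process leaves strictly more than $r+2$ sinks, hence strictly more than $\frac{d+3}{2}$ terms --- is exactly the bookkeeping you have deferred, and without it the proposal is a plan rather than a proof.
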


In solving the linear system, we can therefore set the coefficients of $x^d$ and $y^d$ equal to $1$ and set
the rest of the distinguished variables equal to $0$. 
Doing so renders it unnecessary to discuss distinguished variables. We determine the values of $d+1$ of the variables
automatically by using this lemma.

We now get the following system. The variables $a_1,..., a_L$ are the coefficients of the monomials
$x,y, x^2, xy, y^2, x^3,x^2y,xy^2, y^3,...$ of degree at most $d-1$. We homogenize as before, except that
we set the result equal to $(x+y)^d - x^d + y^d$, obtaining two fewer equations in $d+1$ fewer variables.

We illustrate by revisiting Example 5.3. 

\begin{example} We wish to solve
$$(a_1 x + a_2 y) (x+y)^6 + (a_3 x^2 +a_4 xy + a_5 y^2 ) (x+y)^5 + ... (a_{21}x^6 + ... + a_{27} y^6)(x+y) $$
$$ = (x+y)^7 - x^7 - y^7. $$
The lemma implies that $a_{28}=a_{35}=1$ and that $a_j = 0$ for $29 \le j \le 34$.
The solutions are the same. Thus Lemma 5.1 determines the last $d+1$ variables; as claimed, the system
has $d+1$ fewer unknowns and $2$ fewer equations.
\end{example}

The paper [LL] has additional results enabling us to decrease the number of variables. For example, given that $f$
is as in Lemma 5.1, we may also assume that $a_1=a_2=0$. In fact the results of section 3 in [LL] imply
the following restrictions of the solutions to the linear system.

\begin{proposition} Let $d$ be an odd integer.  Let $f(x,y)$ be a polynomial of degree $d$ such that
the coefficients of $f$ are non-negative and $f(x,y) = 1$ on the line $x+y=1$. Assume
that $f$ has $N$ terms for $N$ as small as possible. The following all hold:
\begin{itemize}
\item The coefficient of $x^j$ is $0$ for $0 \le j \le d-1$.
\item The coefficient of $y^j$ is $0$ for $0 \le j \le d-1$.
\item The coefficient of $x^j y^k$ is $0$ for $j+k=d$ and neither $j$ nor $k$ zero.
\item The coefficient of $x^d$ is $1$ and the coefficient of $y^d$ is $1$.
\item If $d > 1$, then some coefficient of a monomial of degree $d-1$ is not zero.
\end{itemize}\end{proposition}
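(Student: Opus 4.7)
The plan is to derive the five items as fairly direct consequences of Lemma 5.1, combined with the boundary behavior of the constraint $f=1$ on $x+y=1$, and a short homogeneous-component comparison for the last item.

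First, I would invoke Lemma 5.1, which asserts that
\[
f(x,y) = x^d + y^d + (\text{lower order terms}).
\]
Reading off the degree-$d$ homogeneous component immediately gives item (4)---the coefficients of $x^d$ and $y^d$ both equal $1$---and item (3), since every monomial $x^j y^k$ with $j+k=d$ and $jk\ne 0$ is already accounted for by the two pure powers, so its coefficient is $0$.

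Next, for items (1) and (2), I would exploit the fact that $(1,0)$ and $(0,1)$ both lie on the line $x+y=1$. Writing $a_{j,k}$ for the coefficient of $x^j y^k$, the constraint evaluated at those two points gives
\[
\sum_{j=0}^{d} a_{j,0} = f(1,0) = 1, \qquad \sum_{k=0}^{d} a_{0,k} = f(0,1) = 1.
\]
Since $a_{d,0}=a_{0,d}=1$ from the previous step and the remaining $a_{j,0}$ and $a_{0,k}$ are nonnegative by hypothesis, each of them must vanish. This yields (1) and (2) simultaneously (including, in particular, that the constant term $a_{0,0}$ is zero).

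For item (5), I would argue by contradiction: suppose $d>1$ but every coefficient of a degree-$(d-1)$ monomial of $f$ vanishes. Since $f-1$ vanishes on $x+y=1$, write $f - 1 = (x+y-1)\,h(x,y)$ with $\deg h = d-1$, and decompose $h = h_{d-1} + h_{d-2} + \cdots$ into homogeneous parts. Matching degree-$d$ parts forces $(x+y)\,h_{d-1} = x^d + y^d$, and since $d$ is odd this gives $h_{d-1} = x^{d-1} - x^{d-2}y + \cdots + y^{d-1}$. Matching degree-$(d-1)$ parts, together with the vanishing hypothesis, yields $h_{d-1} = (x+y)\,h_{d-2}$, i.e.\ $(x+y) \mid h_{d-1}$; but evaluating at $x=-y$ gives $h_{d-1}(-y,y) = d\,y^{d-1} \ne 0$ (again using that $d$ is odd), a contradiction.

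The substantive work is packaged inside Lemma 5.1 (proved in [LL]); once that rigidity of the degree-$d$ part is granted, the remaining items reduce to elementary substitution and one homogeneous-component comparison. The main obstacle, if a self-contained treatment were desired, would be confirming that the arguments of section 3 of [LL] really extend to give item (5), or else supplying the short divisibility argument above as a clean substitute.
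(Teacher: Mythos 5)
Your proposal is correct, but it is worth noting that the paper does not actually prove this proposition at all: it simply asserts that ``the results of section 3 in [LL]'' imply the listed restrictions, so the official route is a citation to Lebl--Lichtblau, not an argument. What you do differently is to make the proposition self-contained modulo Lemma 5.1 alone: reading items (3) and (4) off the statement that the degree-$d$ homogeneous part of $f$ is exactly $x^d+y^d$; getting items (1) and (2) by evaluating at the points $(1,0)$ and $(0,1)$ of the line, where $f(1,0)=\sum_j a_{j,0}=1$, $a_{d,0}=1$, and non-negativity kills the remaining pure coefficients (this correctly covers the constant term as well); and getting item (5) from $f-1=(x+y-1)h$, where matching the degree-$d$ parts gives $(x+y)h_{d-1}=x^d+y^d$, so $h_{d-1}=\sum_{i=0}^{d-1}(-1)^i x^{d-1-i}y^i$, while vanishing of the degree-$(d-1)$ part of $f$ (legitimate since $d>1$, so the $-1$ does not interfere) would force $h_{d-1}=(x+y)h_{d-2}$, contradicting $h_{d-1}(-y,y)=d\,y^{d-1}\neq 0$ because $d$ is odd. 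Each of these steps checks out, and minimality of $N$ enters exactly where it must, through Lemma 5.1. What your approach buys is transparency: only the lemma (Lemma 3.1 of [LL]) is imported, and the rest is elementary evaluation and a one-line divisibility computation, whereas the paper's approach buys brevity by outsourcing everything, including analogues of your arguments, to [LL]. The only caveat is the one you flag yourself: if one wanted full independence from [LL], Lemma 5.1 would still need proof, since examples like $(x+y)^d$ show the conclusions genuinely require the minimality hypothesis.
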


Using (all but the last item in) Proposition 5.1, 
we obtain a linear system with fewer unknowns. We write this new system 
\begin{itemize}
\item ${\bf T}{\bf u}= {\bf v}$.
\item Each $u_j$ is non-negative.
\end{itemize}

We also note the following result, stated explicitly in (different language in) [LL].

\begin{proposition} The solutions to this system have rational components. \end{proposition}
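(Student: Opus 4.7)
The plan is to exploit the fact that the linear system $\mathbf{T}\mathbf{u} = \mathbf{v}$ has integer data (entries of $\mathbf{T}$ are the binomial coefficients arising from expanding $x^\alpha(x+y)^{d-|\alpha|}$, and the components of $\mathbf{v}$ are binomial coefficients coming from $(x+y)^d - x^d - y^d$). Any solution whose support $S$ indexes linearly independent columns of $\mathbf{T}$ will then be rational, since it satisfies the square invertible subsystem $\mathbf{T}_S \mathbf{u}_S = \mathbf{v}$, and Cramer's rule (or Gaussian elimination over $\mathbb{Q}$) produces a rational answer. So the whole proposition reduces to the claim that any \emph{sparsest} non-negative solution necessarily has a support with linearly independent columns.

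First I would prove the following auxiliary observation: the kernel of $\mathbf{T}$ contains no nonzero non-negative vector. Indeed, if $\mathbf{T}\mathbf{w} = 0$ with $\mathbf{w}\ge 0$, then the polynomial identity $\sum_\alpha w_\alpha x^\alpha (x+y)^{d-|\alpha|} \equiv 0$ holds, and specializing to $x=y=1$ gives $\sum_\alpha w_\alpha 2^{d-|\alpha|} = 0$. Since every summand is non-negative, all $w_\alpha$ vanish.

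Next I would argue by contradiction to establish the support claim. Let $\mathbf{u}$ be a sparsest non-negative solution with support $S$, and suppose the columns of $\mathbf{T}$ indexed by $S$ are linearly dependent. Then there is a nonzero $\mathbf{w}$ supported in $S$ with $\mathbf{T}\mathbf{w} = 0$. By the observation above $\mathbf{w}$ has both positive and negative entries; replacing $\mathbf{w}$ by $-\mathbf{w}$ if necessary, move $\mathbf{u}$ along the line $\mathbf{u} + t\mathbf{w}$ starting from $t=0$. Because $\mathbf{u}_S > 0$ strictly and some entry of $\mathbf{w}$ is negative, there is a largest $t_\ast > 0$ for which $\mathbf{u} + t_\ast \mathbf{w} \ge 0$; at $t_\ast$, at least one more coordinate becomes zero. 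The resulting vector is still a non-negative solution but has strictly smaller support, contradicting minimality.

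Once the support of a sparsest solution is shown to index linearly independent columns, the restricted system $\mathbf{T}_S \mathbf{u}_S = \mathbf{v}$ has at most one solution, which is rational by the integrality of the data. I expect the only subtle step to be the kernel-positivity observation; the movement-along-the-kernel argument is a standard basic-feasible-solution step from linear programming, but it must be applied carefully to the post-reduction system of Proposition 5.1 to make sure no hidden equality constraints restrict the allowed perturbations, and the polynomial evaluation trick (plugging in $(1,1)$) is what cleanly rules out non-negative null vectors in this specific setting.
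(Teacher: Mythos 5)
Your argument is correct, and it is worth pointing out that the paper itself offers no proof of this proposition: it is simply quoted from [LL], so your write-up serves as a self-contained justification rather than a variant of an argument in the text. Your route is the standard basic-feasible-solution (vertex) argument from linear programming, which is surely the intended one: a sparsest nonnegative solution must have support $S$ indexing linearly independent columns of $\mathbf{T}$, hence it is the unique solution supported in $S$ of a system with integer data (binomial coefficients on both sides), hence rational. Note that the statement must indeed be read, as you read it, as referring to the solutions of interest, namely those of minimal $L^0$ norm; the full solution set of $\mathbf{T}\mathbf{u}=\mathbf{v}$, $\mathbf{u}\ge 0$ is a polytope of positive dimension in general and contains irrational points. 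Your two closing worries dissolve on inspection. The kernel-positivity observation, while true (and proved correctly by evaluating at $(1,1)$), is not actually needed: any nonzero kernel vector $\mathbf{w}$ supported in $S$ has, after replacing $\mathbf{w}$ by $-\mathbf{w}$ if necessary, a negative entry on $S$, which is all the perturbation step uses. And the post-reduction system of Proposition 5.1 has no hidden constraints precisely because the coefficients of $x^d$ and $y^d$ are frozen at $1$ and the remaining top-degree coefficients at $0$; consequently $\mathbf{u}+t_*\mathbf{w}$ still corresponds to a polynomial of degree exactly $d$, with nonnegative coefficients, equal to $1$ on $x+y=1$, so minimality is genuinely contradicted. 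One cosmetic point: $\mathbf{T}_S$ is not square, so to invoke Cramer's rule you should first select an invertible square row-submatrix of $\mathbf{T}_S$ (possible since its columns are independent); consistency of the full system then guarantees that the resulting rational vector is the solution.
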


\section{sources and sinks}

In this section we describe the main idea from [DKR], with the modest hope that it can be used more generally
in finding the minimum $L^0$ norm of a solution to a linear system.

Let $p(x,y)$ denote a polynomial of degree $d=2r+1$ with non-negative coefficients. Assume that
$p(x,y)=1$ on the line $x+y=1$. Then the polynomial $p(x,y) - 1$ is divisible by $x+y-1$. We call the quotient $q(x,y)$.
Then $q$ is of degree $d$. Let ${\bf G}_p$ denote the Newton diagram for $q$. Thus ${\bf G}_p$ is the collection of lattice
points $(a,b)$ for which the coefficient of $x^a y^b$ in $q$ is not zero. If the coefficient is positive we label the lattice
point  with a ${\bf P}$, if it is negative we label it with a ${\bf N}$, and if zero with a ${\bf Z}$. When the point is labeled ${\bf P}$
we draw directed arrows upward and to the right from $(a,b)$ to $(a,b+1)$ and to $(a+1,b)$. When the point is labeled ${\bf N}$
we draw these arrows from $(a,b+1)$ and $(a+1,b)$ to $(a,b)$. We call a point in ${\bf G}_p$ a {\it source} 
if all arrows drawn at that point lead away from it, and we call it a {\it sink} if all arrows drawn at that point lead into it.
It is easy to see that the origin is the only source.

The first crucial observation is that the number of terms in $p$ is at least as large as the number of sinks in ${\bf G}_p$.
The reason is that a sink at $(a,b)$ means that the coefficients of $x^{a+1}y^b$ and $x^a y^{b+1}$ in $p(x,y)$ 
must be positive, as no cancellation can occur. Next one observes that the graph ${\bf G}$ corresponding to $(x+y)^d$ has each lattice point
$(a,b)$ labeled ${\bf P}$ when $a+b \le d-1$. The reason is simply that the geometric series gives
$$ { (x+y)^d - 1 \over x+y-1} = 1 + (x+y) + (x+y)^2 + ... + (x+y)^{d-1}. $$
The process of homogenization can be reversed to reach a given polynomial $p$.

What happens to the number of sinks in the directed graphs when $p$ is a solution of degree $d$ with minimum $L^0$ norm?
Assume first that $d=2r+1$ is odd. Then ${\bf G}$ has $2r+2$ sinks, the points $(a,b)$ where $a+b=d$. As we dehomogenenize,
we change the picture. By [DKR] the following hold.
The points $(d-1,0)$ and $(0,d-1)$ must have the labels ${\bf P}$ and hence there are sinks at $(d,0)$ and $(0,d)$.
These correspond to Lemma 5.1. 
The remaining $2r$ sinks may coalesce down to $r$ sinks, but no further. We conclude that the total number of sinks is
therefore at least $2 + r$, and hence the number of terms $N$ in $p$ is a least $2+r$. Thus
$N \ge 2 + r$ and hence $2r+1 = d \le 2N-3$. The group invariant polynomials from Theorem 5.1 show that this bound is realized.

We illustrate with the example $p(x,y)$ given by
$$ p(x,y) = x^5 + 5 x^3 y + 5 xy^2 + y^5. $$

We begin with $(x+y)^5$ and dehomogenize until we get to $p$. We use $\to$ to denote the effect of either rewriting terms
or setting $x+y$ equal to $1$. 

$$ (x+y)^5 = x^5 + 5 x^4 y + 10 x^3 y^2 + 10 x^2 y^3 + 5 x y^4 + y^5 \to $$
$$ x^5 + 5x^3 y(x+y) + 5x^3 y^2 + 5 x^2 y^3 + 5 x^2 y^3 + 5 x y^4 + y^5 \to $$
$$ x^5 + 5 x^3 y + 5 x^2y^2 (x+y) + 5 xy^3(x+y) + y^5 \to $$
$$ x^5 + 5 x^3 y + 5 x^2 y^2 + 5 xy^3 + y^5 \to $$
$$ x^5 + 5 x^3 y + 5 x y^2(x+y) + y^5 \to $$
$$ x^5 + 5 x^3 y + 5 x y^2 + y^5. $$

The polynomial $q$ corresponding to $p$ is given by
$$ q(x,y) = 1 + x + x^2 + x^3 + x^4 + y + y^2 + y^3 + y^4 + 2xy - 2x y^2 - xy^3 + 3x^2 y + x^2 y^2- xy^3. $$ 
There are sinks at the points $(5,0)$, $(3,1)$, $(1,2)$, and $(0,5)$.
Two of the original six sinks remain; the other four coalesced into two. The total number of sinks
in the graph ${\bf G}_p$ is four, corresponding to the minimum $L^0$ norm of the linear system.

When $d$ is even, it is easy to see that the minimum $L^0$ norm arises also from these examples. One simply multiplies either $x^{d-1}$
or $y^{d-1}$ by $x+y$ to create an example with one more term and of degree $d$.

\section{uniqueness}

For certain odd integers $d$, the only polynomials $p(x,y)$ with $N$ terms, with positive coefficients,
with $d=2N-3$, and with $p(x,y)=1$ on $x+y=1$ are those from Theorem 5.1.
One can of course interchange the roles of $x$ and $y$. For other odd integers $d$, there exist additional polynomials
with these properties. We say informally {\bf uniqueness fails}. It is natural to ask for the precise set of $d$ for which
these additional maps exist. The best results to date on this problem come from [DL2] and [LL].

All known examples where uniqueness fails come from the same procedure. It follows in all these examples, 
to be discussed below,
that the explanation for the failure of uniqueness is that we can find a solution with a smaller $L^1$ norm.

Suppose $p(x,y)$ is any polynomial with nonnegative coefficients.
Then the sum of the coefficients of $p(x,y)$ equals $p(1,1)$.
If we regard this sum as the $L^1$ norm of the solution to our linear system,
then we have a formula for the $L^1$ norm.
Consider for example the invariant polynomials $p_d$ given by (15) when $d=2r+1$ is odd. The sum of the coefficients
is
$$ \left( {1 + \sqrt{5} \over 2 }\right)^{2r+1} + \left( {1 - \sqrt{5} \over 2 }\right)^{2r+1} + 1 = \varphi^{2r+1 } + \psi^{2r+1} + 1 \eqno (21) $$
and hence essentially a Fibonacci number. 

\begin{remark} A standard formula (often erronenously attributed to Binet) for the $n$-th Fibonacci number is
$$ F_n = {1 \over \sqrt{5}} \left( \left( {1 + \sqrt{5} \over 2 }\right)^{n} - \left( {1 - \sqrt{5} \over 2 }\right)^{n}\right) = {1 \over \sqrt{5}} \left(\varphi^n - \psi^n\right). \eqno (22)$$
Since $|\psi| < 1$, for large $n=2r+1$, the $L^1$ norm (the sum of the coefficients) is close to $1+ \varphi^n$ which is close to
$\sqrt{5}F_n$. \end{remark}

All known sharp polynomials of degree $2d+1$ are found by a procedure
which replaces terms in $p_{2r+1}$ using identities arising from using analogous polynomials $p_{2k}$, each of which
has a single negative coefficient.
The resulting effect always decreases the $L^1$ norm of the solution. We give a simple example.

Begin with $p(x,y) = x^7 + 7 x^5 y +14 x^3 y^2 + 7 x y^3 + y^7$. We have $x^2+2y-y^2=1$ on the line $x+y=1$, and therefore
$x^2 + 2y = 1 + y^2$ there. On the line we have
$$ x^7 + 7 x^5 y +14 x^3 y^2 + 7 x y^3 +y^7 = x^7 + 7 x^3 y(x^2 + 2y)  + 7 xy^3 + y^7 $$
$$ \equiv x^7 + 7 x^3y(1+y^2) + 7 xy^3 + y^7 = x^7 + 7 x^3y + 7 x^3 y^3 + 7 xy^3 + y^7.   $$
Here $\equiv$ denotes equality on the line $x+y=1$.
We obtain a second polynomial of degree $7$, also with $5$ terms, also equal to $1$ on the line,
and with non-negative coefficients. Thus the $L^0$ norm is the same, but the $L^1$ norm has decreased from $30$ to $23$.
Looking back at formulas (17) through (20), we note that the corresponding $L^1$ norms are ${25/2}, 23, 30, 30$.

Let us describe this procedure in more detail. Define $p_d$ by

$$  p_d(x,y) = \left({ x + \sqrt{x^2+4y} \over 2} \right)^d
+ \left({ x - \sqrt{x^2+4y} \over 2} \right)^d +(-1)^{d+1} y^d. \eqno (25) $$
Then $p_d(x,y)= 1$ on $x+y=1$, but $p$ has a negative coefficient when $d$ is even.
Given an odd integer $d$ and an even integer $m$, the polynomial
$$ f_d(x,y) = p_d(x,y) - cx^a y^b(p_m(x,y) - 1) \eqno (26) $$
also equals $1$ on the line $x+y=1$. If we choose the monomial $cx^a y^b$ cleverly, then in passing from $p_d$ to $f_d$
we are replacing terms in $p_d$ with the same number of terms in $f_d$. In certain situations
the polynomial $f_d$ will also have nonnegative coefficients.

We can also iterate this procedure. It was proved in [DL2] that this procedure generates new sharp polynomials in various cases,
including for example when 
\begin{itemize}
\item $d \ge 7$ and $d$ is congruent to $3$ modulo $4$. 
\item $d \ge 7$ and $d$ is congruent to $1$ modulo $6$.
\end{itemize}

It is important to emphasize two points. First of all, there exist numbers for which
the only sharp polynomials are given by $p_d(x,y)$ and $p_d(y,x)$. In this case, one says {\it uniqueness holds}.
Second of all, it is unknown whether
there exist any sharp polynomials not constructed via this procedure.

\begin{remark} By the work in [LL], uniqueness is known to hold in degrees $1,3,5,9,17$. Later the third author verified it also
for $d=21$; the proof was computer assisted and took a large amount of computer time. See [L2].
It is unknown whether the collection of numbers for which uniqueness holds is finite.
\end{remark}

\section{sum of the coefficients and the $L^1$ norm}

We write ${\bf S}_d$ for the set of polynomials $f(x,y)$ of degree $d$ with non-negative coefficients such that $f(x,y)=1$ when $x+y=1$.

We saw in the last section that the sum of the coefficients, namely $f(1,1)$,  carries some useful information. 
Given that the sum of the coefficients can be regarded as the $L^1$ norm of the solution to the linear system, it is natural to gather some information
about this number. Let $m_d = \inf_{g \in {\bf S}_d} g(1,1)$. We begin with the following simple result.
The infimum is not achieved because ${\bf S}_d$ is not closed under taking limits of the coefficients.

\begin{proposition} For each degree $d$, we have $m_d =1$. For $d\ge 1$, the infimum is not attained. \end{proposition}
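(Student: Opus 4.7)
The plan is to prove the lower bound $m_d \geq 1$, produce an explicit sequence of polynomials in $\mathbf{S}_d$ whose values at $(1,1)$ converge to $1$, and then show non-attainment by a rigidity argument.

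First I would establish $m_d \geq 1$. Let $f(x,y) = \sum c_{ab} x^a y^b \in \mathbf{S}_d$. Since the coefficients are non-negative, the function $(x,y) \mapsto f(x,y)$ is non-decreasing on the positive quadrant, so $f(1,1) \geq f(1/2,1/2)$. But $(1/2,1/2)$ lies on the line $x+y=1$, so $f(1/2,1/2) = 1$. Therefore $f(1,1) \geq 1$.

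Next I would verify $m_d \leq 1$ by exhibiting a one-parameter family. For $0 < \varepsilon < 1$ define
$$ f_\varepsilon(x,y) = (1-\varepsilon) + \varepsilon (x+y)^d. $$
Expanding shows every coefficient is non-negative (namely $1-\varepsilon$ for the constant term and $\varepsilon\binom{d}{k}$ for $x^k y^{d-k}$), and the degree is exactly $d$ since $\varepsilon > 0$. Also $f_\varepsilon(x,y) = (1-\varepsilon) + \varepsilon \cdot 1 = 1$ on $x+y=1$, so $f_\varepsilon \in \mathbf{S}_d$. Finally
$$ f_\varepsilon(1,1) = (1-\varepsilon) + \varepsilon \cdot 2^d = 1 + \varepsilon(2^d - 1), $$
which tends to $1$ as $\varepsilon \to 0^+$. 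Combined with the lower bound, this gives $m_d = 1$.

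For non-attainment when $d \geq 1$, I would sharpen the lower bound argument. Equality in $f(1,1) \geq f(1/2,1/2)$ reads
$$ \sum_{a,b} c_{ab}\bigl(1 - (1/2)^{a+b}\bigr) = 0. $$
Since each summand is non-negative, equality forces $c_{ab} = 0$ whenever $a+b > 0$; that is, $f \equiv c_{00}$. The constraint $f = 1$ on $x+y=1$ then gives $f \equiv 1$, which has degree $0$, contradicting $\deg f = d \geq 1$. Hence no $f \in \mathbf{S}_d$ achieves $f(1,1) = 1$.

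There is no real obstacle here; the argument is routine once one notices that evaluation at $(1/2,1/2)$ ties the line constraint to the $L^1$ norm. The only point that merits care is confirming that the family $f_\varepsilon$ genuinely has degree $d$ (which it does for every $\varepsilon > 0$), so that $f_\varepsilon$ lies in $\mathbf{S}_d$ rather than in some $\mathbf{S}_{d'}$ with $d' < d$.
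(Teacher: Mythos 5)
Your proposal is correct and follows essentially the same route as the paper: the upper bound uses exactly the family $\lambda + (1-\lambda)(x+y)^d$ that the paper uses, and your lower-bound/non-attainment step merely fills in, via evaluation at $(1/2,1/2)$, the strict inequality $h(1,1)>1$ for non-constant $h$ that the paper asserts without detail. No gaps.
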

\begin{proof} The result is trivial when $d=0$. For $d \ge 1$ and $0 \le \lambda \le 1$, put $g(x,y) = \lambda + (1 - \lambda )(x+y)^d$.
Then $g(1,1) = \lambda + (1 -\lambda)2^d \ge 1$. Letting $\lambda$ tend to $1$ shows that the infimum is $1$. In general,
suppose $h$ is not constant, has non-negative coefficients, and satisfies $h(x,y)=1$ when $x+y=1$.
Then $h(1,1) > 1$.  \end{proof}

\begin{remark} If we restrict to polynomials with no constant term, then $m_d=2$. To see why, consider
$g(x,y) = \lambda(x+y) + (1 - \lambda )(x+y)^d$.  A similar situation arises by considering a polynomial with order of vanishing $\nu$ less than its degree $d$. On the other hand,
if $f$ is homogeneous of degree $d$, then $f(x,y) = (x+y)^d$ and $f(1,1)$ achieves the {\it maximum} possible value of $2^d$. \end{remark}

The next example illustrates a similar phenomenon.

\begin{example} For $\lambda \in [0,3]$, put 
$$f_\lambda(x,y) = x^3 + y^3 + (3 -\lambda)xy + \lambda xy (x^3 + 3 xy + y^3). $$
Then $f_\lambda \in {\bf S}_5$ for $\lambda \in (0,3]$. The degree is $5$ when  $\lambda \ne 0$. But when $\lambda=0$ the degree drops to $3$. 
Note that $f_\lambda(1,1) = 5 + 4 \lambda$. Hence, for $g$ of degree $5$ in this family of polynomials, the infimum of $g(1,1)$ is not achieved.  \end{example}

\begin{remark} Suppose $f \in {\bf S}_d$ and there are polynomials $g$ and $h$ with non-negative coefficients
such that $f(x,y)=g(x,y) + (x+y) h(x,y)$ and $h$ is not the zero polynomial.  (Thus $f$ is in the range of the partial tensor product operation. See [D3], for example, for this terminology.)
Put $u = g+h$. Then $u(x,y) =1$ when $x+y=1$ and $u(1,1) < f(1,1)$. Thus we can decrease the $L^1$ norm when such $g,h$ exist. 
\end{remark}

The following easy proposition offers a good reason for studying symmetric examples. In this result we consider a subset of ${\bf S}_d$ that is closed
under interchanging the variables and averaging. In other words $g(x,y) \in S$ implies $g(y,x) \in S$ and $f,g \in S$ implies ${f+g \over 2} \in S$.

\begin{proposition} Let $S \subset {\bf S}_d$ be closed under the operations of interchanging the variables and averaging. If $m= \inf_S g(1,1)$ is achieved,
then there is a symmetric $h \in S$ with $h(1,1)=m$. 
\end{proposition}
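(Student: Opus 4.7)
The plan is to produce the symmetric witness $h$ by explicit construction: take any minimizer $g \in S$ and average it with its swap $\tilde g(x,y) := g(y,x)$.

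First I would set $m = \inf_S g(1,1)$ and pick $g \in S$ attaining the infimum, so $g(1,1) = m$. By the assumed closure of $S$ under interchanging the variables, $\tilde g \in S$ as well. Since interchanging $x$ and $y$ does not affect the value at $(1,1)$, we still have $\tilde g(1,1) = g(1,1) = m$. Note also that $\tilde g \in \mathbf{S}_d$ automatically: the coefficients of $\tilde g$ are a permutation of those of $g$, hence non-negative, the degree is preserved, and $\tilde g(x,y) = g(y,x) = 1$ on $y+x=1$.

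Next I would define $h := \tfrac{1}{2}(g + \tilde g)$. By the second closure hypothesis (closure under averaging), $h \in S$. It is symmetric because $h(y,x) = \tfrac{1}{2}(g(y,x) + \tilde g(y,x)) = \tfrac{1}{2}(\tilde g(x,y) + g(x,y)) = h(x,y)$. Evaluating at $(1,1)$ gives $h(1,1) = \tfrac{1}{2}(g(1,1) + \tilde g(1,1)) = m$, which is the required equality.

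There is essentially no obstacle: the argument is a one-line symmetrization, and the only thing to check is that the two closure properties are used in the right order (interchange, then average). The subtlety worth flagging is the hypothesis that the infimum is actually achieved; without it, averaging a minimizing sequence $g_k$ with its swap $\tilde g_k$ still yields symmetric elements of $S$ with values tending to $m$, but a genuine symmetric minimizer need not exist in $S$.
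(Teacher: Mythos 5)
Your proof is correct and follows the same symmetrization argument as the paper: take a minimizer $g$, average it with its swap $g(y,x)$, and use the two closure hypotheses to conclude $h\in S$ is symmetric with $h(1,1)=m$. The only difference is that you spell out the routine verifications (and the remark about non-attainment) more explicitly than the paper does.
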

\begin{proof}
Suppose $m= \inf_S g(1,1)$.  Define a symmetric polynomial $h$ by
$$ h(x,y) = {g(x,y) + g(y,x) \over 2}. $$
Then $h(1,1) = g(1,1)$. Note that $h$ has non-negative coefficients, that $h(x,y) =1$ when $x+y=1$, and $h \in S$ by assumption.   \end{proof}

\begin{example} Put $g(x,y) = x^5 + y^5 + {10 \over 3} xy + {5 \over 3} (x^4 y + x y^4)$. Then
$g$ is symmetric and $g(1,1) = {26 \over 3}$. Using linear programming, one can show that $g(1,1)$ is minimal among polynomials in ${\bf S}_7$, assuming
that $g$ includes the monomials $x^7$ and $y^7$ each with coefficient $1$.
\end{example}

Proposition 8.2 is important because it significantly decreases the number of unknowns in the linear system. 
Let us make a connection with uniqueness.

\begin{remark} At the $10$th workshop on Geometric Analysis of PDEs and Several Complex Variables in August 2019, 
the first author posed a new problem. For the integers $1,3,7,19$ there are sharp examples that are also symmetric in $x,y$. 
See formulas (17) and (18) above when the degree is $7$. Excluding the trivial case
of degree $1$, the other cases provide examples where the group $\Gamma_f$ (defined in [DX1] and 
discussed in Section 11) is a semi-direct product of a torus and a group of order two. 
It is natural to ask for which integers there are sharp examples with this additional symmetry. One then considers a different
compressed sensing problem. The allowed polynomials are $(xy)^a (x^b+y^b)$ and one proceeds in a similar fashion.
In other words, one seeks constants $c[a,b]$ such that the following hold:
\begin{enumerate}
\item $ \sum_{a,b} c[a,b] (x y)^a (x^b + y^b) = 1$ on the line $x+y=1$.
\item $c[a,b] \ge 0$ for each $(a,b)$.
\item $2a + b \le d$ for each $(a,b)$ but $2a+b =d$ for some $(a,b)$.
\item The number of non-zero monomials is as small as possible. (When $b \ne 0$, a nonzero $c[a,b]$ contributes two monomials.)
\end{enumerate}
Proposition 5.1 allows us to decrease the number of variables. By Theorem 5.2, the smallest possible number of terms is ${d+3 \over 2}$.
This value is achieved when $d=1,3,7, 19$. 
The third author has given a computer assisted verification that, up to degree $31$ and except
for degrees $1,3,7,19$, the minimum number of terms {\bf exceeds} ${d+3 \over 2}$. The first author hopes to discuss this problem in future work.

\end{remark}

\section{Source dimension at least three}

When the source dimension is at least three, things are in some ways less interesting. The following results provide
monomials for which $N$ is as small as possible given $d$. 
These polynomials are both easier to find and less interesting than the polynomials
in Theorem 5.1.

\begin{proposition} Assume $n \ge 3$. For $d \ge 1$, put $N= d(n-1)+1$.
Then there is a monomial proper map $f: {\mathbb B}_n \to {\mathbb B}_N$
of degree $d = {N-1 \over n-1}$. Equivalently, there is a monomial $p$ in $n$ real variables,
with non-negative coefficients, such that $p(x)=1$ on $s(x)=1$. \end{proposition}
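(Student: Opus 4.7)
The plan is to exhibit an explicit polynomial realizing the bound. By the equivalence stated in Section 4, it is enough to produce a polynomial $p$ in $n$ real variables with non-negative coefficients, with exactly $d(n-1)+1$ distinct monomial terms, of degree $d$, satisfying $p(x)=1$ on $s(x)=1$. My candidate is
$$
p_d(x_1,\dots,x_n) \;=\; x_n^d \;+\; (x_1+x_2+\cdots+x_{n-1})\sum_{k=0}^{d-1} x_n^k.
$$

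First I would check the combinatorial properties. The monomials appearing in $p_d$ are $x_n^d$ together with $x_j\,x_n^k$ for $1\le j\le n-1$ and $0\le k\le d-1$; these are pairwise distinct, so $p_d$ has exactly $1+(n-1)d=N$ terms, each with coefficient $1$. The term $x_n^d$ (and equally $x_j x_n^{d-1}$) shows $\deg p_d = d$, and every coefficient is non-negative.

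Next I would verify the hyperplane identity. On $s(x)=1$, substitute $x_1+\cdots+x_{n-1}=1-x_n$ and apply the geometric-series identity $(1-x_n)\sum_{k=0}^{d-1}x_n^k = 1-x_n^d$ to obtain
$$
p_d\big|_{s(x)=1} \;=\; x_n^d + (1-x_n^d) \;=\; 1.
$$
Setting $x_j = |z_j|^2$ and choosing unimodular coefficients, the components $z_n^d$ and $z_j z_n^k$ ($1\le j\le n-1$, $0\le k\le d-1$) assemble into a monomial proper map $f\colon \mathbb{B}_n\to\mathbb{B}_N$ of degree $d$, as required.

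There is essentially no analytic obstacle; the whole content is finding the right formula, and if needed one can motivate (or re-derive) it inductively: $p_1(x)=s(x)$ has $n=1\cdot(n-1)+1$ terms, and passing from $p_d$ to $p_{d+1}$ by replacing the single monomial $x_n^d$ with $x_n^d\cdot s(x)$ removes one term, adjoins the $n$ new distinct monomials $x_1x_n^d,\dots,x_{n-1}x_n^d,x_n^{d+1}$, increases the term count by exactly $n-1$, and preserves the identity $p\equiv 1$ on $s=1$. The explicit closed form above is what this induction produces.
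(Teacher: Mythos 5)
Your proof is correct and is essentially the paper's own argument: you construct the same Whitney-type polynomial $t(1+x_n+\cdots+x_n^{d-1})+x_n^d$ with $t=x_1+\cdots+x_{n-1}$, verify the term count $d(n-1)+1$ and the identity on $s(x)=1$ via the geometric series, and pass to the monomial map by substituting $x_j=|z_j|^2$. The inductive motivation you add at the end is a nice extra but not needed; otherwise the two proofs coincide.
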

\begin{proof} Put $x = ({\bf  t},u)$, where $u=x_n$ and ${\bf t} = (x_1,...,x_{n-1})$.
Put $t= \sum_{j=1}^{n-1} x_j$.
Then $s(x) = t+u$. For each positive integer $d$, we define a polynomial $p$ by
$$ p(t,u) = t(1+u+u^2 + ... + u^{d-1}) + u^d.$$
Then $p$ is of degree $d$, and all of its coefficients are non-negative. 
There are precisely $d(n-1)+1$ non-vanishing coefficients. On the set where $t+u=1$ we have
$$ p(t,u) = p(t,1-t) = t\left( {1- (1-t)^d \over 1-(1-t)}\right) + (1-t)^d  = 1. $$
Replacing $(x_1,...,x_{n-1})$ by $(|z_1|^2,..., |z_{n-1}|^2)$ and $u$ by $|z_n|^2$ yields the squared norm
of the desired map $f$. 
\end{proof}

Maps as in the Proposition are called {\it Whitney maps}, because they generalize the map
$$ (x_1,...,x_n) \to (x_1,...,x_{n-1}, x_1 x_n,..., x_{n-1} x_n, x_n^2)  \eqno (W) $$
studied by Whitney. The maps in (W) are proper maps from ${\mathbb R}^n$ to ${\mathbb R}^{2n-1}$.

The following is one of the main results in [LP2]. 

\begin{theorem} Assume $n \ge 3$ and $f$ has degree $d$. Then $d \le {N-1 \over n-1}$. When $n\ge 4$, a complete list
 of the monomial sphere maps for which $d= {N-1 \over n-1}$ is known. \end{theorem}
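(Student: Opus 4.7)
The plan is as follows. By Proposition 4.1 and the homogenization procedure of Section 4, the problem reduces to the following real-variables statement: among polynomials $p(x_1,\dots,x_n)$ with non-negative coefficients, degree exactly $d$, and satisfying $p(x)=1$ on the hyperplane $s(x):=x_1+\cdots+x_n=1$, find the minimum number $N$ of non-vanishing coefficients. Since $p-1$ vanishes on $\{s=1\}$, one may write $p-1=(s-1)\,q$ with $\deg q\le d-1$ and $q(0)=1$, so that
\[ a_\beta \;=\; \sum_{i=1}^{n} q_{\beta-e_i} \;-\; q_\beta \;\ge\; 0 \quad\text{for every } \beta\neq 0, \]
where $q_\gamma=0$ whenever $\gamma$ falls outside $\{\gamma\in\mathbb{Z}_{\ge 0}^n:|\gamma|\le d-1\}$. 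The degree hypothesis supplies at least one $\beta^*$ with $|\beta^*|=d$ and $a_{\beta^*}>0$.

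To prove $N\ge 1+d(n-1)$, I would adapt the sources-and-sinks argument of Section 6 to $n$ dimensions. On the lattice simplex $\{\gamma:|\gamma|\le d\}$ form the directed graph with edges $\gamma\to\gamma+e_i$, and label each $\gamma$ with $|\gamma|\le d-1$ by the sign of $q_\gamma$. Sinks correspond exactly to $\beta$ with $a_\beta>0$, so $N$ equals the number of sinks; the origin is the unique source since $q_0=1$. The flow propagates from $0$ through $d$ successive degree-level slices, each an $(n-1)$-dimensional sub-simplex. A careful level-by-level tally of the inequalities $\sum_i q_{\beta-e_i}\ge q_\beta$ should show that every slice $1\le k\le d-1$ contributes at least $n-1$ sinks and that the top slice $k=d$ contributes at least $n$ sinks (including $\beta^*$), summing to $(d-1)(n-1)+n=1+d(n-1)$. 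Sharpness is then exhibited by Proposition 9.1.

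For the classification when $n\ge 4$: at the sharp value $N=1+d(n-1)$ the level-by-level tally is tight everywhere, which forces each nonzero $q_\gamma$ and each sink $\beta$ into a highly restricted position. Combined with $q_0=1$, an induction in which each level slice is itself treated as an $(n-1)$-variable sphere-map problem (with $n-1\ge 3$) pins down $q$, up to permutation of coordinates, as $1+x_n+x_n^2+\cdots+x_n^{d-1}$, after which $p$ must coincide with the Whitney map of Proposition 9.1. The hypothesis $n\ge 4$ is essential: when $n=3$ the intermediate slices are two-dimensional, and the additional flexibility documented in Section 7 produces sharp examples outside the Whitney family. \emph{The main obstacle} is the combinatorial level-by-level count, in particular ruling out configurations in which negative values of $q_\gamma$ permit cancellations that would drop some slice's sink count below $n-1$. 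The argument hinges on $0$ being the \emph{unique} source, together with global constraints along the boundary of the Newton simplex that no local cancellation can evade.
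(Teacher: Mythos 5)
Your proposal is a plan, not a proof, and the plan's central step is exactly the hard content of the theorem. Two specific problems. First, the claimed dictionary between sinks and terms is overstated: even in the two-variable case of Section 6 the relationship is only an inequality (the number of terms of $p$ is \emph{at least} the number of sinks of ${\bf G}_p$), not the equality ``$N$ equals the number of sinks'' that your tally relies on; in $n$ variables each sink forces positivity of several coefficients $a_{\beta}$, but distinct sinks can force the same coefficient, and a nonzero $a_{\beta}$ need not sit over a sink, so the count has to be set up much more carefully. Second, the assertion that every degree slice $1\le k\le d-1$ must contribute at least $n-1$ sinks and the top slice at least $n$ is precisely what needs proving, and you offer no argument for it beyond ``a careful level-by-level tally should show''; you even flag ruling out cancellations among negative $q_\gamma$'s as the main obstacle. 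Until that lemma is established (including why the count cannot be deficient in one slice and compensated in another, which a slice-by-slice argument cannot see), the degree bound $d\le \frac{N-1}{n-1}$ has not been proved. The classification for $n\ge 4$ is likewise only gestured at (``pins down $q$ up to permutation''), and your appeal to Section 7 for extra flexibility when $n=3$ is off target: Section 7 concerns source dimension $2$, and in fact the classification for $n=3$ is posed as Open Problem 6 in this paper, so it cannot be dismissed as a known source of non-Whitney examples.

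For comparison: the paper does not prove this statement at all; it quotes it from [LP2] (Lebl--Peters). The proof there does not proceed by extending the sources-and-sinks graph of [DKR] to $n$ dimensions --- indeed the introduction of this paper remarks that the graph-theoretic method has not been used outside the two-variable problem --- but by different combinatorial and algebraic techniques for polynomials constant on a hyperplane. So your route, if it could be completed, would be genuinely new, but as written it reduces the theorem to an unproven counting lemma of comparable difficulty.
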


We proceed analogously as the case $n=2$.
Assume $n \ge 3$. Fix a positive integer $d$. Let $K$ denote the dimension of the vector space of polynomials
of degree at most $d$ in $n$ variables with no constant term. Let $k$ denote the dimension of the space
of homogeneous polynomials of degree $d$ in $n$ variables. We call the last $k$ variables in ${\mathbb R}^K$ distinguished.
Let ${\bf v}$ denote the vector in ${\mathbb R}^k$
whose components are the multinomial coefficients. As before, the transformation $T$ arises via homogenization.
We consider the linear system:

\begin{itemize}
\item $T{\bf u}= {\bf v}$.
\item At least one (hence at least $n$) of the distinguished variables is not zero.
\item Each $u_j$ is non-negative.
\end{itemize}

Let $N$ denote the minimum $L^0$ norm of the solution. Combining Proposition 9.1 and Theorem 9.2 
tells us that $N = d(n-1) + 1$. The linear algebra problem again simplifies using homogenization techniques.

\section{Sparseness constraints}

The context of proper mappings between balls provides a new issue in understanding sparseness for these linear systems.
Given a proper rational mapping in source dimension $n$, not every value of $N$ arises for {\it target-minimal maps}.
We can achieve $N=1$ for non-constant rational sphere maps only when $n=1$. Thus, if $n\ge 2$, the value of $N$ cannot lie in the interval
$1 < N < n$. This fact is easy to see using complex variable theory. It is harder to see, but still true, that
the range $n < N < 2n-2$ cannot arise either. There is a general conjecture
on the gaps that are possible. See [HJX] and [HJY] for work on the {\it gap conjecture}.

We state and sketch the proof of a result from [DL1]. Fix the source dimension $n$. If $N \ge n^2 - 2n+2$, there is a target-minimal monomial example with $L^0$ norm equal to $N$.

\begin{theorem} Put $T(n)= n^2-2n+2$. For each $N$ with $N \ge T(n)$, there is a target-minimal proper monomial
map $f:{\mathbb B}_n \to {\mathbb B}_N$. \end{theorem}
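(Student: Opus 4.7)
The plan is to argue by induction on $N$, starting from a base case $N = T(n) = (n-1)^2 + 1$ and showing that every admissible $L^0$ norm can be increased by one. For the base case, one exhibits an explicit target-minimal monomial sphere map $f_0 \colon {\mathbb B}_n \to {\mathbb B}_{T(n)}$ by iterating a Whitney-type partial tensor operation on the Whitney map from formula (W), which has target dimension $2n-1$. After carefully tracking collisions in the support, one reaches a support of size exactly $T(n)$ in a bounded number of steps; note that for monomial maps, target-minimality is equivalent to all $N$ distinct monomials carrying nonzero coefficients.

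For the inductive step, suppose $f \colon {\mathbb B}_n \to {\mathbb B}_N$ is target-minimal monomial with support $S \subset {\mathbb Z}_{\ge 0}^n$ and $|S| = N \ge T(n)$. For any $\alpha \in S$ with coefficient $c_\alpha > 0$ in $\|f\|^2$, perform the local surgery
\[
c_\alpha |z|^{2\alpha} \;=\; c_{\alpha,0}\, |z|^{2\alpha} \;+\; c_{\alpha,1} \sum_{j=1}^n |z|^{2(\alpha + e_j)}, \qquad c_\alpha = c_{\alpha,0} + c_{\alpha,1},
\]
valid on $\|z\|^2 = 1$. Setting $k(\alpha) := |\{j : \alpha + e_j \in S\}|$, the ``keep'' version ($c_{\alpha,0} > 0$) changes $|S|$ by $n - k(\alpha)$, while the ``drop'' version ($c_{\alpha,0} = 0$) changes it by $n - 1 - k(\alpha)$. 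One therefore produces a target-minimal map of $L^0$ norm $N+1$ whenever some $\alpha \in S$ satisfies $k(\alpha) = n - 1$ (keep) or $k(\alpha) = n - 2$ (drop).

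The main obstacle is the combinatorial lemma behind the inductive step: if $S$ is the support of a target-minimal monomial sphere map and $|S| \ge T(n)$, then some $\alpha \in S$ has $k(\alpha) \in \{n-2, n-1\}$. The sphere identity $\sum c_\alpha |z|^{2\alpha} \equiv 1$ on the unit sphere, together with non-negativity of the $c_\alpha$, imposes strong structural constraints on $S$ (for instance, $S$ must meet every coordinate axis). The specific threshold $T(n) = (n-1)^2 + 1$ should arise as the maximum cardinality of a support in which every $\alpha$ satisfies $k(\alpha) \in \{0, 1, \ldots, n-3, n\}$, and establishing this extremal bound under the sphere-map constraint is the combinatorial heart of the argument.
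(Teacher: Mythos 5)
Your inductive step rests on a lemma you yourself flag as unproved, and that lemma is false as stated. You claim that whenever $S$ is the support of a target-minimal monomial sphere map with $|S| \ge T(n)$, some $\alpha \in S$ has $k(\alpha) \in \{n-2,\, n-1\}$, and you guess that $T(n)=(n-1)^2+1$ arises as the maximal size of a support avoiding these values. For $n \ge 3$ the homogeneous maps $z \mapsto z^{\otimes d}$ are counterexamples: their support is all of $\{\alpha : |\alpha|=d\}$, of cardinality $\binom{n+d-1}{d}$, which is arbitrarily large; the map is target-minimal (all multinomial coefficients are nonzero); and every $\alpha$ in the support has $k(\alpha)=0$, since no $\alpha+e_j$ lies in the support. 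So no threshold on $|S|$ alone guarantees a surgery site that raises the term count by exactly one, and a ``$+1$ at a time'' induction over the class of all target-minimal monomial maps cannot be run. Restricting to your constructed family does not obviously rescue it either: you would need to maintain, as an invariant, the presence of a monomial with exactly $n-1$ or $n-2$ upward neighbors already in the support, and you give no argument for that. Your surgery identity itself, and the remark that monomial target-minimality just means all $N$ coefficients are nonzero, are fine; the gap is precisely the combinatorial lemma you call the heart of the argument.

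The paper (following [DL1]) sidesteps this by abandoning increments of $1$. Starting from $s(x)=x_1+\cdots+x_n$ and always operating on the pure top-degree term $x_n^d$ (so collisions never occur), the full step $Wp = p - c\,x_n^d + c\,x_n^d s(x)$ adds $n-1$ terms and the partial step $Vp = p - \frac{c}{2}x_n^d + \frac{c}{2}x_n^d s(x)$ adds $n$ terms; the maps $V^kW^j s(x)$ therefore realize every count $n + j(n-1) + kn$. Since $n-1$ and $n$ are coprime, the Frobenius number of $\{n-1,n\}$ is $n(n-1)-(n-1)-n = n^2-3n+1$, so every integer at least $(n-1)(n-2)$ is of the form $j(n-1)+kn$, i.e., every $N \ge n + (n-1)(n-2) = n^2-2n+2 = T(n)$ is attained. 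This numerical-semigroup count, not the extremal support bound you conjecture (which the homogeneous example shows does not exist), is the true source of the threshold $T(n)$.
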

\begin{proof} The proof for $n=1$ is easy. We simply take an $N$-tuple of positive numbers $c_j^2$ whose sum is $1$.
Use these numbers to define $f$ by
$$f(z) = (c_1 z,c_2 z_2,...,c_N z^N). $$
For $n\ge 2$ we proceed as follows. For $x \in {\mathbb R}^m$, put $s(x)= \sum x_j$.
Given a polynomial $p(x)$ with non-negative coefficients and with $p(x)=1$ on the line $s(x)=1$, and $c>0$, we may define new polynomials
with the same properties by 
$$ Wp(x) = p(x) - c x_n^d + c x_n^d s(x) $$
$$ Vp(x) = p(x) - {c \over 2} x_n^d + {c \over 2} x_n^d s(x).$$
Regard $W$ and $V$ as operations which we may iterate, always applying them on the pure term of highest degree, we form
$V^kW^js(x)$. One can then show for $N\ge T(n)$ that we obtain an example with the desired number of terms. See [DL1] for details.
\end{proof}

We mention several related results.  Let $p$ be a rational sphere map of degree $d$ and
source dimension $n$. Then the terms of degree exactly equal to $d$ must map into a subspace of dimension at least $n$.
On the other hand, if $p$ is a {\it homogeneous} polynomial sphere map of degree $d$, then the target dimension of $p$ must be at least
the binomial coefficient ${n+d-1 \choose d}$. The maps from Corollary 5.2 and Proposition 9.1 illustrate an interesting phenomenon.
For these maps, which are not homogeneous for $d\ge 3$,
the terms of highest degree map into a space of dimension at least (the source dimension) $n$, but no larger. For a typical
rational sphere map, the terms of highest degree themselves already map into a space too large 
for the target dimension to be as small as possible.
In other words, the solution of the compressed sensing problem must have enough terms of degree $d$ but not too many.

\section{Groups associated with mappings}

The papers [DX1]  and [DX2] and the book [D3] associate groups with holomorphic mappings. These groups have many uses; we mention only those
that directly bear on our discussion. Let ${\rm Aut}({\mathbb B}_n)$ denote the group of holomorphic automorphsims of the ball
${\mathbb B}_n$.

\begin{definition} Let $f:{\mathbb B}_n \to {\mathbb B}_N$ be a proper rational mapping. Then $A_f$ is the subgroup
of ${\rm Aut}({\mathbb B}_n) \times {\rm Aut} ({\mathbb B_N})$ consisting of those pairs $(\gamma, \psi)$ for which
$$ f \circ \gamma = \psi \circ f. \eqno (29) $$
The Hermitian source group $\Gamma_f$ is the projection of $A_f$ onto its first factor; the Hermitian target group $T_f$
is the projection onto the second factor. 
\end{definition}

It follows from the work in [DX2] that $f$ is target-minimal if and only if there is a group homomorphism for $\Gamma_f$ to $T_f$.
When $f$ is target-minimal, and $\gamma \in \Gamma_f$, then the automorphism $\psi = \Phi(\gamma)$ from (29) is uniquely determined.
The map $\Phi$ is easily seen to be a group homomorphism. When $f$ is not target-minimal, the automorphism $\psi$ is not uniquely determined.
Therefore the groups provide information about the minimum target dimension $N$, the primary issue in this paper.

\begin{example} 
The maps defined in (15) are all target-minimal. We compute this homomorphism in the degree $3$ case. The other cases are similar.
Put $f(z,w) = (z^3, \sqrt{3}zw, w^3)$. Then $f:{\mathbb B}_2 \to {\mathbb B}_3$ is a proper
holomorphic map. Here $\Gamma_f$ is the group generated by the diagonal unitary matrices and the element of order two
interchanging the variables. Here $T_f$ is the group generated by 
$$ \begin{pmatrix} e^{i 3 \alpha} & 0 & 0 \cr 0 & e^{i(\alpha+\beta)} & 0 \cr 0 & 0 & e^{i 3 \beta} \end{pmatrix}$$

$$ \begin{pmatrix} 0 & 0 & 1 \cr 0 & 1 & 0 \cr 1 & 0 & 0 \end{pmatrix}. $$
The kernel of $\Phi$ is the cyclic group of order three generated by
$$ \begin{pmatrix} \eta & 0 \cr 0 & \eta^2 \end{pmatrix}$$
where $\eta$ is a primitive cube root of $1$. Here ${\rm kernel}(\Phi)$ is the set of $\gamma$ for which $f \circ \gamma = f$.
\end{example}

These groups can be used to decide when a proper map between balls is equivalent to a monomial map. We have the following general results:

\begin{theorem} Let $f:{\mathbb B}_n \to {\mathbb B}_N$ be a  rational proper mapping. Then:

\begin{itemize} 

\item $\Gamma_f = {\rm Aut}({\mathbb B}_n)$ if and only if $f$ is a linear fractional transformation.

\item $\Gamma_f$ is noncompact if and only if $\Gamma_f = {\rm Aut}({\mathbb B}_n)$. Otherwise $\Gamma_f$
is contained in a conjugate of ${\bf U}(n)$.

\item $\Gamma_f$ is a conjugate of ${\bf U}(n)$ if and only if $f$ is equivalent to a juxtaposition of tensor powers.

\item $\Gamma_f = {\bf U}(n)$ if and only if $f$ is a juxtaposition of tensor powers. 

\item $\Gamma_f$ contains an $n$-torus if and only if $f$ is equivalent to a monomial map.

\item Let $G$ be a finite subgroup of ${\rm Aut}({\mathbb B}_n)$. Then there is an $N$ and a proper rational map $f:{\mathbb B}_n \to {\mathbb B}_N$ such that
$\Gamma_f = G$.

\item Let $G$ be a finite subgroup of ${\bf U}(n)$. Then there is an $N$ and a proper polynomial map $f:{\mathbb B}_n \to {\mathbb B}_N$ such that
$\Gamma_f = G$.

\end{itemize}

\end{theorem}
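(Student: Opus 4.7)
The plan is to stratify the theorem by the size of $\Gamma_f$, working from the largest cases down, using as a backbone the standard structure of $\text{Aut}(\mathbb{B}_n)$: its maximal compact subgroups are conjugates of $\mathbf{U}(n)$, and every noncompact element pushes interior points to the boundary. I would dispose of items one and two together. Suppose $\Gamma_f$ contains a noncompact element $\gamma$; then a suitable subsequence of iterates $\gamma^{k}$ converges on compacta of $\mathbb{B}_n$ to a constant map with value on $\partial\mathbb{B}_n$. The functional equation $f \circ \gamma^k = \psi_k \circ f$, together with properness of $f$ and uniqueness of the lifted automorphisms $\psi_k$ (from target-minimality or from the basic fact that the target automorphism is determined by $f$ when the image is not contained in a proper affine subspace), forces parallel degenerate behavior on the target side. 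This rigidity is sufficient, via the classical equivariance argument, to conclude that $f$ itself must be a linear fractional map. Conversely, if $f$ is linear fractional then for each $\gamma \in \text{Aut}(\mathbb{B}_n)$ one checks directly that $\psi = f \circ \gamma \circ f^{-1}$ lies in $\text{Aut}(\mathbb{B}_N)$. Thus for non-linear-fractional $f$ the group $\Gamma_f$ is compact and hence, up to conjugation, contained in $\mathbf{U}(n)$.

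For items three and four, I would argue representation-theoretically. If $\Gamma_f \supseteq \mathbf{U}(n)$, then the scalar $\|f(z)\|^2$ is $\mathbf{U}(n)$-invariant and hence a polynomial in $\|z\|^2$. Writing $f = \bigoplus_d f_d$ by homogeneous degree, each $f_d$ must be a $\mathbf{U}(n)$-equivariant map from $\mathbb{C}^n$ to some target summand. Since the $d$-th symmetric power of the standard representation is irreducible, Schur's lemma identifies each $f_d$ with a scalar multiple of the tensor power $z^{\otimes d}$ (possibly with multiplicity, but each multiplicity component being a further isometric embedding). Assembling the $f_d$ under the sphere constraint $\|f\|^2 \equiv 1$ on $\|z\|^2 = 1$ exhibits $f$ as a juxtaposition of tensor powers. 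Conjugating the source by a unitary recovers the statement about conjugates of $\mathbf{U}(n)$, and the converse directions are immediate from $\mathbf{U}(n)$-equivariance of $z \mapsto z^{\otimes d}$.

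For item five, after conjugation we may assume $\Gamma_f$ contains the standard diagonal torus $T^n \subset \mathbf{U}(n)$. The $T^n$-action decomposes $\mathbb{C}[z_1,\dots,z_n]$ into distinct one-dimensional weight spaces, one for each multi-index $\alpha$ spanned by $z^\alpha$. The induced homomorphism $T^n \to T_f \subset \mathbf{U}(N)$ decomposes $\mathbb{C}^N$ into weight spaces; equivariance of $f$ then forces each component of $f$, in a diagonalizing target basis, to be a scalar multiple of a single monomial. This realizes $f$ as equivalent to a monomial sphere map; the reverse implication is immediate.

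The final two items are constructive, and I expect them to be the main obstacle because one must not only build $f$ with $\Gamma_f \supseteq G$ but also rule out accidental enlargement. For item seven, given finite $G \subset \mathbf{U}(n)$, I would construct $f$ by combining a $G$-equivariant embedding built from the regular representation with a $G$-invariant high-degree perturbation chosen so that its $\mathbf{U}(n)$-stabilizer is exactly $G$. The presence of the high-degree invariant fixes the source stabilizer; the equivariant part ensures $G$ actually acts. For item six, I would first reduce to the unitary case: any finite subgroup of $\text{Aut}(\mathbb{B}_n)$ fixes some interior point (by averaging the orbit of any point under the Bergman metric), so after conjugating that fixed point to the origin the group sits inside $\mathbf{U}(n)$, where item seven applies. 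The delicate step in both cases is verifying that no automorphism outside $G$ leaves our constructed $f$ equivariant; this is done by choosing the coefficients in the perturbation generically within the space of $G$-invariants, a genericity argument that I would carry out following the template of [DX1].
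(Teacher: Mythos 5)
The paper does not prove this theorem at all: it is quoted as a summary of results from [DX1], [DX2], and [D3], so your proposal has to be judged against the arguments in those sources. At the level of strategy you are tracking them correctly: compact subgroups of ${\rm Aut}({\mathbb B}_n)$ are handled by the Cartan fixed-point (Bergman-metric averaging) argument, the target group is unitarized through the homomorphism $\gamma \mapsto \psi_\gamma$ (which, as the paper notes, is only well defined after reducing to a target-minimal map -- a reduction you invoke but should state), and items three through five then follow from Schur's lemma on ${\rm Sym}^d({\mathbb C}^n)$ and the torus weight-space decomposition. Even there you elide points that matter for the statement as written: $\|f\|^2$ is ${\bf U}(n)$-invariant only \emph{after} the target group has been conjugated into ${\bf U}(N)$ and $f$ replaced by an equivalent map (this is exactly why items three and five say ``equivalent to'' while item four does not, a distinction your sketch blurs); the homogeneous decomposition $f=\bigoplus_d f_d$ presumes $f$ is polynomial although the hypothesis is only rational; and in item one the formula $\psi=f\circ\gamma\circ f^{-1}$ defines an automorphism only of the image slice and needs the standard extension argument for totally geodesic embeddings.

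The two places where the proof genuinely does not exist in your write-up are precisely the two hard theorems. First, ``noncompact $\Gamma_f$ implies $f$ is linear fractional.'' The normal-families observation you make -- iterates $\gamma^k$ degenerate to a boundary constant and the $\psi_k$ degenerate correspondingly -- is true for \emph{every} proper map and yields no contradiction and no rigidity by itself: both $f\circ\gamma^k$ and $\psi_k\circ f$ converge locally uniformly to boundary constants, so nothing is extracted. The actual argument in [DX1] requires a renormalization/rescaling analysis of the degenerating automorphisms (and control of the degree), and ``the classical equivariance argument'' is not a substitute for it. Second, the realization results (items six and seven). Your reduction of six to seven via a fixed point and conjugation is fine, but the construction for seven cannot proceed by ``generic $G$-invariant high-degree perturbation,'' because the coefficients of a sphere map are not free parameters: they must satisfy the rigid Hermitian system (9) ($\|f\|^2=1$ on the sphere), so generic choices within the space of $G$-invariants are not available. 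The constructions in [DX2] and [D3] build the map from $G$-invariant theory combined with tensor/juxtaposition operations (and existence results of the type of Theorem 3.1), and then give a separate argument that no automorphism outside $G$ intertwines the resulting map; that verification is the substantive content of the theorem and is exactly the step your proposal defers to ``genericity \ldots following the template of [DX1].'' As it stands, items one/two and six/seven are asserted rather than proved.
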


The last two parts of this theorem suggest additional questions of the sort considered throughout this paper. 
Given a finite subgroup $G$ of the unitary group, there is a map $f$ for which $\Gamma_f=G$. 
Relating either the smallest possible degree or the minimum possible target dimension to the group seems difficult. 
In [D3] it is noted that a map $f$ for which $\Gamma_f$ is trivial must be of degree at least $3$ and that $3$ is possible.

\section{Open problems}

\begin{enumerate} 
\item Assume $n\ge 2$. Let $f:{\mathbb C}^n \to {\mathbb C}^N$ be a rational sphere map of degree $d$. When $n=2$, prove that
$d \le 2N-3$. When $n\ge 3$, prove that $d \le {N-1 \over n-1}$.

\item Put $n=2$. For each odd degree $d$, find the minimum $N=N(d)$ for which there is 
a monomial sphere map $f:{\mathbb C}^2 \to {\mathbb C}^N$ of degree $d$ with
$\|f(z,w)\|^2 = \|f(w,z)\|^2$. For example, when $d=1,3,7,19$ the answer is ${d+3 \over 2}$ but for other small odd $d$ this 
minimum cannot be achieved. Also, is the number of odd $d$ for which $N(d)= {d+3 \over 2}$ finite or infinite?
See Remark 8.3 for more discussion.

\item Put $n=2$. Suppose that $p(x,y)$ is a sharp polynomial of degree $d$. What is the minimum value of the $L^1$ norm $p(1,1)$? 

\item Let $S$ be the largest subset of ${\bf S}_d$ such that each $f\in S$ contains the monomials $x^d$ and $y^d$ with coefficient $1$.
Find the minimum value of $f(1,1)$. See Example 8.2 and Remark 8.3. In other words, 
find the minimum $L^1$ norm of all solutions to the sensing problem. Stated otherwise, given the degree $d$, 
we wish to minimize a linear function of the coefficients $c[a,b]$ subject
to the first three constraints from Remark 8.3. Proposition 8.2 considerably reduces the search space. 
In addition, can one characterize the polynomials realizing the minimum? The first author has obtained some results about the structure of
this problem and the third author has written code finding all examples up to degree $35$. As noted in the introduction, Bob Vanderbei has written independent code.
The formulas are absurdly complicated. For example, in degree $11$
the unique minimizer is given by $ x^{11} + y^{11} + (xy) g(x,y)$
where
$$ g(x,y) = {99 \over 28} + {33 \over 14} (x^4+y^4) + {33 \over 14}(x^5 + y^5)+ {55 \over 28}(x^8 + y^8) + {11 \over 14}(x^9 + y^9). $$

\item If the previous problems are too difficult, can one find asymptotic relations between the $L^1$ and $L^0$ norms as the degree increases? Compare with [Do2]. 

\item Find all sharp monomial sphere maps when $n=3$; the answer is known for $n\ge 4$. See Proposition 9.1, Theorem 9.1,  and [LP]. 

\item Express the results from  [BEH] and [BH] in the language of compressed sensing.

\item Extend the ideas of this paper to the hyperquadric setting. See for example [Gr], [GLV], [LP1], and [LP2].

\item Relate the gap conjecture of Huang-Ji (See [HJX] and [HJY]) to compressed sensing.

\item Given a finite subgroup $G$ of ${\bf U}(n)$, what is the minimum possible degree of a rational sphere map $f$ with $\Gamma_f = G$?
Can one relate this problem to known results in invariant theory?

\end{enumerate}

\section{bibliography}

\medskip

[BEH] M. S. Baouendi, P. F. Ebenfelt, and X. Huang, 
Holomorphic mappings between hyperquadrics with small signature difference, {\it Amer. J. Math.} 133 (2011), no. 6, 1633-1661. 

\medskip

[BH] M. S. Baouendi and X. Huang, Super-rigidity for holomorphic mappings between hyperquadrics with positive signature.,
{\it J. Differential Geom.} 69 (2005), no. 2, 379-98. 

\medskip

[CD]  D. W. Catlin and J. P. D'Angelo,  A stabilization theorem for Hermitian forms and applications to holomorphic mappings, 
{\it Math. Res. Lett.} 3 (1996), no. 2, 149-166.

\medskip

[D1] J. D'Angelo,  Several Complex Variables and the Geometry of Real Hypersurfaces,
CRC Press, Boca Raton, Fla., 1992.

\medskip

[D2] J. D'Angelo, Proper holomorphic mappings,
positivity conditions, and isometric imbedding, {\it J. Korean Math Society}, May 2003, 1-30.

\medskip

[D3] J. D'Angelo,  Hermitian Analysis: from Fourier series to CR Geometry, second edition, Springer, 2019.

\medskip

[DKR] J. D'Angelo, S. Kos, and  E. Riehl,
A sharp bound for the degree of proper monomial mappings between balls, {\it J. Geom. Anal.} 13 (2003), no. 4, 581-593.

\medskip

[DL] J. D'Angelo and J. Lebl, Homotopy equivalence for proper holomorphic mappings,
{\it Adv. Math.} 286 (2016), 160-180.

\medskip

[DL1] J. D'Angelo and J. Lebl, On the complexity of proper mappings between balls, 
{\it Complex Variables and Elliptic Equations},
Volume 54, Issue 3, Holomorphic Mappings (2009), 187-204.

\medskip
[DL2] J. D'Angelo and J. Lebl, Complexity results for CR mappings between spheres,
{\it Int. J. of Math.}, Vol. 20, No. 2 (2009),  149-166.

\medskip

[DX1] J. D'Angelo and M. Xiao, Symmetries in CR complexity theory, {\it Adv. Math.} 313 (2017), 590-627.

\medskip

[DX2] J. D'Angelo and M. Xiao, Symmetries and regularity for holomorphic maps between balls, 
{\it Math. Res. Lett.}, Vol. 25, no. 5 (2018), 1389-1404.

\medskip

[Do1] D. Donoho, Compressed sensing, {\it IEEE Trans. Inform. Theory}, Vol. 52, No. 4 (2006), 1289-1306.
   
\medskip

[Do2]  D. Donoho, For most large underdetermined systems of linear equations the minimal $L^1$-norm solution is also the sparsest solution,
{\it Comm. Pure Appl. Math.},  Vol. 59, no. 6 (2006), 797-829.

\medskip

[F] F. Forstneri\v c,
Extending proper holomorphic maps of positive codimension,
{\it Inventiones Math.}, 95(1989), 31-62.

\medskip

[Fa] J. Faran, Maps from the two-ball to the three-ball, {\it Inventiones Math.} 68 (1982), no. 3, 441-475. 

\medskip

[Gr] D. Grundmeier, Signature pairs for group-invariant Hermitian polynomials, {\it Internat. J. Math.} 22 (2011), no. 3, 311-343. 

\medskip

[GLV] D. Grundmeier, J. Lebl, and L. Vivas, 
Bounding the rank of Hermitian forms and rigidity for CR mappings of hyperquadrics. 
{\it Math. Ann.} 358 (2014), no. 3-4, 1059-1089. 

\medskip

[HJX] X. Huang, S. Ji, and D. Xu, A new gap phenomenon for proper holomorphic mappings from ${\mathbb B}^n$ into ${\mathbb B}^N$, 
{\it Math. Res. Lett.}  13(4) (2006), 515-529.

\medskip

[HJY] X. Huang, S. Ji, and W. Yin,  On the third gap for proper holomorphic maps between balls, {\it Math. Ann.} 358 (2014), no. 1-2, 115-142.

\medskip

[L1] J. Lebl, Normal forms, Hermitian operators, and CR maps of spheres and hyperquadrics,
{\it  Michigan Math. J.} 60 (2011), no. 3, 603-628.

\medskip

[L2] J. Lebl, Addendum to Uniqueness of Certain Polynomials
Constant on A Line, arXiv:1302.1441v2

\medskip

[LL] J. Lebl and D. Lichtblau,
Uniqueness of certain polynomials constant on a line, {\it Linear Algebra Appl.} 433 (2010), no. 4, 824-837.

 \medskip

[LP1]  J. Lebl and H. Peters, Polynomials constant on a hyperplane and CR maps of hyperquadrics,
{\it Mosc. Math. J.} 11 (2011), no. 2, 285-315.

\medskip

[LP2] J. Lebl,  and H. Peters, Polynomials constant on a hyperplane and CR maps of spheres,
 {\it Illinois J. Math.} 56 (2012), no. 1, 155-175.
 
 \medskip
 
 [PWZ] M. Petkovsek, H. Wilf, and D. Zeilberger, $A=B$. A. K. Peters, Ltd., Wellesley, MA, 1996.
 
 \medskip
 
[W] S. Wono, Proper Holomorphic Mappings in Several Complex Variables, PhD thesis, Univ. Illinois, 1993.

\end{document}